%% file: Monochrom-subgraph-arxiv.tex
\documentclass[aap,reqno]{imsart}

\RequirePackage{amsthm,amsmath,amsfonts,amssymb}
\RequirePackage[numbers]{natbib}
\RequirePackage{color}
\RequirePackage{mathtools,enumitem}

\RequirePackage{csquotes}

\RequirePackage{subcaption}

\RequirePackage[colorlinks,citecolor=blue,urlcolor=blue]{hyperref}
\RequirePackage{graphicx}
\input{preamble}

\usepackage{svg}
\usepackage{bm}
\usepackage{cleveref}
\usepackage{tikz}
\usepackage{ushort}

\startlocaldefs

\theoremstyle{remark}

\makeatletter

\newcommand{\customitem}[1]{%
\item[\rm#1]\protected@edef\@currentlabel{#1}%
}
\makeatother

\newcommand\rou[1]{\lfloor{#1}\rfloor}
\newcommand\ceil[1]{\lceil{#1}\rceil}
\newcommand{\zhu}[1]{{\color{blue}[#1]}}
\newcommand\bc[1]{\left({#1}\right)}
\newcommand\cbc[1]{\left\{{#1}\right\}}

\newcommand\brk[1]{\left\lbrack{#1}\right\rbrack}

\newcommand\abs[1]{\left|{#1}\right|}
\usepackage{dsfont}

\newcommand\Erw{\mathbb{E}}
\newcommand\ZZ{\mathbb{Z}}

\newcommand\PP{\mathbb{P}}

\newcommand{\dtv}{\mathrm{d}_{\mathrm{TV}}}

\newcommand{\longversion}[1]{}

\newcommand{\rd}{{\rm d}}
\newcommand{\aut}{{\rm aut}}
\endlocaldefs

\newcommand{\Max}{{\rm MaxC}}
\newcommand{\Min}{{\rm MinM}}

\newcommand{\dg}[1]{{\small \color{brown}[David: #1]}}

\newcommand{\yd}[1]{{\small \color{blue}[Yatin: #1]}}

\newcommand{\G}{\mathbb{G}}
\newcommand{\Hg}{\mathbb{H}}

\newcommand{\ignore}[1]{\relax}

\begin{document}

\begin{frontmatter}
\title{Minimum Number of Monochromatic Subgraphs of a Random Graph}

\begin{aug}
\author{\fnms{Yatin} \snm{Dandi}\ead[label=e1]{yatin.dandi@epfl.ch}}
\and
\author{\fnms{David} \snm{Gamarnik}\ead[label=e2]{gamarnik@mit.edu}}
\and
\author{\fnms{Haodong} \snm{Zhu}\ead[label=e3]{h.zhu1@tue.nl}}
\address{Statistical Physics of Computation Laboratory, École Polytechnique Fédérale de Lausanne (EPFL), \\\printead{e1}}
\address{Operations Research Center and Sloan School of Management, MIT, Cambridge, \\\printead{e2}}
\address{Department of Mathematics and Computer Science, Eindhoven University of Technology, \\\printead{e3}}

\end{aug}

\begin{abstract}
We consider the problem of minimizing the number of monochromatic subgraphs of a random graph, when each node of the host graph is
assigned one of the two colors. Using a recently discovered contiguity between appearance of 
strictly balanced subgraphs $F$ in a random graph, and random hypergraphs where copies 
of $F$ are generated independently, we show that the minimum value converges to a limit,
when the expected number of copies of $F$ is 
linear in the number of nodes $|V|$.
Furthermore, using the connections with mean field
spin glass models, we
obtain an asymptotic expression for 
this limit as the normalized expected number of copies
of $F$ 
and the size of $F$ diverge to infinity.
\end{abstract}


\begin{keyword}
\kwd{Random graph}
\kwd{Monochromatic subgraph}
\kwd{Strictly 1-balanced graph}
\end{keyword}

\end{frontmatter}

\section{Introduction}
The study of monochromatic subgraphs in graph bipartitions is a widely investigated problem in extremal combinatorics and Ramsey theory. 
A classical result by Goodman~\cite{g1959} states that in any bipartition of the complete graph $K_n$, the number of monochromatic triangles admits a sharp lower bound
\[
T(n) = 
\begin{cases}
    \dfrac{u(u-1)(u-2)}{3}, & \text{if } n = 2u; \\
    \dfrac{2u(u-1)(4u+1)}{3}, & \text{if } n = 4u+1; \\
    \dfrac{2u(u+1)(4u-1)}{3}, & \text{if } n = 4u+3.
\end{cases}
\]
\ignore{
\zhu{The general case was studied by Paul Erdős, who proved that the minimum number of monochromatic $K_r$ in any bipartition of $K_n$ is at most $\binom{n}{r}/2^{\binom{r}{2}-1}$ and conjectured that this bound is asymptotically tight for large $n$ \cite{erdHos1962number}. This conjecture was later disproved by Thomason \cite{thomason1989disproof}. In 2012, Conlon proved that any bipartition of $K_n$ contains at least $n^r / C^{(1+o(1))r^2}$ monochromatic $K_r$ subgraphs, where $C \approx 2.18$ \cite{conlon2012ramsey}. This phenomenon, whereby a positive fraction of all $r$-cliques are monochromatic, also appears in other deterministic contexts \cite{frankl1988quantitative}. In general, it can often be characterized by the principle that monochromatic substructures are evenly distributed among configurations that force their occurrence \cite{csv}.}
\dg{I am not sure we need such a detailed summary about deterministic setup. It is somewhat off-topic}
}

While these deterministic settings have attracted considerable attention and yielded several intriguing results, 
the same question in the context of random graphs,
such as Erdős–Rényi random graphs, remains far less understood, even for  the special  case of triangles. In this paper, we consider this question of finding the smallest number of monochromatic subgraphs in bipartitions of random graphs. We do so in a general setting, 
where the subgraph is only assumed to be
strictly balanced.

To set the stage, let $F$ be a graph on $r$ vertices. An $F$-graph $H$   is defined as any pair $(V=V(H), E=E(H))$, where $V$ is a set of vertices, and 
$E$ is a collection of copies $e$ of $F$ called hyperedges, where for each $e$, the vertex set of $e$ is subset of $V$.
 A random \( F \)-graph \( \Hg_F(n, q) \) is an \( F \)-graph with vertex set \( V=[n]=\cbc{1,2,\ldots,n} \), where each of the
\begin{align}\label{eq:N-trials}
\binom{n}{r} \cdot \frac{r!}{\aut(F)}
\end{align}
potential copies of \( F \) on vertices in \( [n] \) is included in the edge set $E$ independently with probability \( q \). Here $\aut(H)$ denotes the set of automorphisms of a graph $H$. In the special case
when $F$ is $K_2$ (two nodes
connected by an edge), we use a more common notation $\G(n,q)$.

Given a simple graph $G=(V,E)$ (so that $F$ is just $K_2$) and given its bipartition \(  V=V_1 \cup V_2, V_1\cap V_2=\emptyset \), a cut, or more specifically an $F$-cut associated with this bipartition is defined as the set of subgraphs $e$ of $G$ which are isomorphic to $F$, and  whose vertex sets have a non-empty intersection with  \( V_1 \) and \( V_2 \). That is, each hyperedge of the cut is  neither entirely contained in \( V_1 \), nor is it entirely contained in \( V_2 \). The  \emph{cut value} associated with a cut is 
the cardinality of this set.
 The \emph{max-cut value}
 denoted by ${\rm MaxC}(G,F)$
 is defined as the largest such value taken over all possible bipartitions of \(V\). Thinking of $V_1$ and $V_2$ as being associated with two distinct colors, the cut associated with the bipartition is simply the set of all non-monochromatic subgraphs of $G$ isomorphic to $F$
  associated with the coloring scheme $V_1,V_2$. Conversely, every $F$-subgraph of $G$ not participating in the cut is monochromatic by definition. The minimum
  number of monochromatic $F$-subgraphs is denoted by ${\rm MinM}(G,F)$. Naturally, ${\rm MinM}(G,F)+{\rm MaxC}(G,F)$ is the total number of $F$-subgraphs of $G$ as this is the case for every bipartition.

In combinatorial optimization and theoretical computer science, there is a long-standing interest in understanding the    minimum number of monochromatic subgraphs achievable by  varying over all bipartitions of  graphs, both in the special case of max-cut values associated with $F=K_2$, and beyond ~\cite{ckpsty2013,dms2017,dl2018,sen2018optimization,shabanov2021maximum}. This is the question we address in our paper.

\ignore{
Indeed, there is a close connection between these two quantities. For a given graph, its max-cut plus the minimum number of monochromatic edges equals the total number of edges in the graph. However, when it comes to other monochromatic subgraph, like triangles, the relation seems to be not so apparent. 
}

Let $F$ be a graph. We say $F$ is a strictly 1-balanced graph if $\rd_1(F')<\rd(F)$ for any subgraph $F'$ of $F$ such that $F'\neq F$.  Here
    \begin{align*}
        \rd_1(G)=\frac{\abs{E(G)}}{\abs{V(G )}-1}.
    \end{align*}
There are many examples of strictly 1-balanced graphs, including  cycles and complete graphs. However, any disconnected graph is not strictly 1-balanced, and neither is a tree-graph.

Recently a great progress was achieved in understanding the relationship between $\G(n,p)$ and $\Hg_F(n,q)$
with $q$ judiciously chosen
as $q=p^{|E(F)|}$. In particular, as shown in \cite{burghart2024sharp} (see Proposition~\ref{prop:coupling} below), when $p$ is at most $n^{-1/d_1(F)}$, the graphs
$\G(n,p)$ and $\Hg(n,q)$ can be coupled in such a way that the number of $F$-hyperedges in $\Hg$ nearly matches the number of copies of $F$ naturally occurring in $\G(n,p)$. As a result a difficult problem of studying minimal number of monochromatic $F$-subgraphs of $\G(n,p)$ can be reduced to simpler version, one defined on $\Hg$ where the occurances of $F$-s are independent by design. This is the insight we use to obtain several asymptotic results on max-cut values for general graphs $F$ in $\G(n,p)$. 

Our first result is as follows.

\begin{theorem}\label{thm-convergence}
    Let $F$ be a strictly 1-balanced graph with $s>0$ edges on $r \geq 2$ vertices. Given $c>0$, let $p= cn^{-1/\rd_1(F)}$ and $q=p^s=c^sn^{1-r}$.   There exists a constant $m(F,c)$ such that
    \begin{align*}
        \frac{{\rm MinM}(\G(n,p),F)}{n}
        \to m(F,c),
    \end{align*}
\whp~ as $n\to\infty$.
\end{theorem}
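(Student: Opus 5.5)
The plan is to move the problem from $\G(n,p)$ to the random $F$-graph $\Hg_F(n,q)$ with $q=c^sn^{1-r}$, and to prove the limit there. In $\Hg_F(n,q)$ the expected number of hyperedges is $\binom{n}{r}\frac{r!}{\aut(F)}q\sim \frac{c^s}{\aut(F)}n$, so the model is a sparse random hypergraph with linearly many independent hyperedges. Its minimum monochromatic count $\Min(\Hg,F)$ is the ground-state energy of a diluted spin model on $\{\pm1\}^n$. Concretely, it is the minimum over colorings of the number of hyperedges whose $r$ vertices all receive the same color.

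\textbf{Step 1 (transfer).} By Proposition~\ref{prop:coupling}, since $p=cn^{-1/\rd_1(F)}$ is in the admissible range, we can couple $\G(n,p)$ with $\Hg_F(n,q)$ so that the two families of $F$-copies agree up to $o(n)$ symmetric difference \whp. I expect the coupling to be stated this way, or in a form from which this follows. Changing one $F$-copy changes the number of monochromatic copies under any fixed coloring by at most one. Hence $|\Min(\G(n,p),F)-\Min(\Hg_F(n,q),F)|=o(n)$ \whp, and it suffices to show that $\Min(\Hg_F(n,q),F)/n$ converges in probability to a constant.

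\textbf{Step 2 (concentration).} Adding or removing one hyperedge changes $\Min$ by at most one. The McDiarmid / bounded-difference inequality, applied to the independent hyperedge indicators, is not immediate because there are polynomially many trials. Instead I will Poissonize: take the number of hyperedges to be Poisson with mean $\lambda n$, where $\lambda=c^s/\aut(F)$, with i.i.d.\ uniform copies. I will then use a martingale that exposes hyperedges one at a time, or Talagrand's inequality. This gives
\begin{align*}
\Min=\Erw\Min+O_P(\sqrt n).
\end{align*}
De-Poissonization costs $O(\sqrt n)$ by the same Lipschitz property.

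\textbf{Step 3 (convergence of expectations).} Here I will use the interpolation method of Bayati--Gamarnik--Tetali for diluted models, which shows that $\Erw\Min$ is superadditive: $\Erw \Min_{n_1+n_2}\ge \Erw\Min_{n_1}+\Erw\Min_{n_2}$ up to $o(n)$. The minimum number of monochromatic hyperedges plays the role of a ground-state energy of a model whose clause cost $\mathbf 1\{\text{all } r \text{ spins equal}\}$ is an indicator of a single assignment pattern up to symmetry. The key property needed is that for each $k$,
\begin{align*}
\Erw_{v}\big[\phi(\sigma_v)\big]^k
\end{align*}
is convex in the empirical measure of $\sigma$, where $\phi$ is the indicator that the hyperedge on vertices $v$ is monochromatic. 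For the monochromatic indicator, $\Pr(\text{monochromatic})=x^r+(1-x)^r$, where $x$ is the fraction of color $1$. Powers of this quantity are convex functions of the empirical measure, because hyperedges are sampled uniformly. This makes the interpolation between $n$ and $(n_1,n_2)$ monotone. Fekete's lemma then gives $\Erw\Min/n\to m(F,c)$, where I use $m(F,c)$ for the limit.

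\textbf{Main obstacle.} The hardest step is the interpolation estimate, specifically handling the near-uniform sampling of ordered $r$-tuples (distinct vertices, copies weighted by $1/\aut(F)$) and the convexity check. I would try the Poisson-clock interpolation first. In it, hyperedges are assigned to the full system at rate $t$ and to the two subsystems at rate $1-t$, proportional to their sizes. I would then show that the derivative of the expected minimum is controlled by the difference $\Erw[\Phi(\text{mixture})]-\sum_i \frac{n_i}{n}\Erw[\Phi(\text{component}_i)]\ge0$, which follows from convexity. Working with a finite-temperature free energy and then letting $\beta\to\infty$ handles the minimum, at an $O(n/\beta)$ cost.
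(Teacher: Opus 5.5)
Your architecture matches the paper's: transfer from $\G(n,p)$ to $\Hg_F(n,q)$ via \Cref{prop:coupling}, then a law of large numbers for $\Min(\Hg_F(n,q),F)/n$. Two remarks on how your version compares.

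In Step~1, \Cref{prop:coupling} gives only a one-sided embedding, up to the relabelling $\tau$, and only for $q\le(1-n^{-\delta})p^s$. To get an $o(n)$ symmetric difference you must do two more things. First, pass to the thinned graph $\Hg_F(n,(1-n^{-\delta})q)$, which loses only $O(n^{1-\delta})$ hyperedges. Second, show by Chebyshev that $\G(n,p)$ has at most $g(n)+o(n)$ copies of $F$. The $\Theta(n)$ variance bound is where strict 1-balancedness enters: pairs of copies overlapping in a graph $H$ with $e_H\ge1$ contribute $n^{2-v_H+e_H(r-1)/s}=o(n)$. This is exactly \Cref{cor-erg-hg}, and you should prove it rather than expect it.

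For the hypergraph limit, the paper does not re-prove anything. Since monochromaticity depends only on the vertex set, it collapses each $F$-copy to its $r$-set. It then couples $\Hg_F(n,q)$ with the $r$-uniform $\Hg_{K_r}(n,\bar q)$, $\bar q\approx\frac{r!}{\aut(F)}q$, up to $o(n)$ hyperedges, and cites Shabanov's theorem. That theorem is itself a Bayati--Gamarnik--Tetali interpolation result, concentration included. The collapse also dissolves your ``main obstacle'' about the $1/\aut(F)$ weighting. Your Steps~2--3 are therefore a self-contained reproof of the cited result. Step~2 is fine, but Step~3 as written has two errors.

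\textbf{First error: the direction is reversed.} Convexity gives $\Phi(\text{mixture})\le\sum_i\frac{n_i}{n}\Phi(\text{component}_i)$, not $\ge$. For a fixed coloring the split system produces \emph{more} monochromatic hyperedges, e.g.\ $x^r+(1-x)^r\le\frac{n_1}{n}\bigl(x_1^r+(1-x_1)^r\bigr)+\frac{n_2}{n}\bigl(x_2^r+(1-x_2)^r\bigr)$. The interpolation therefore yields $\Erw\log Z_n\ge\Erw\log Z_{n_1}+\Erw\log Z_{n_2}$, hence $\Erw\Min_n\le\Erw\Min_{n_1}+\Erw\Min_{n_2}$. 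That is subadditivity; equivalently ${\rm MaxC}$ is superadditive, which is the form in which such results usually appear. Fekete still applies, with the limit an infimum, so the theorem survives. But the inequality you assert is false.

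\textbf{Second error: you check the wrong convexity.} Adding a hyperedge $e$ changes $\log Z$ by $\log\bigl(1-(1-\e^{-\beta})\langle\phi_e\rangle\bigr)=-\sum_{k\ge1}\frac{(1-\e^{-\beta})^k}{k}\langle\phi_e\rangle^k$. The term $\Erw_e\langle\phi_e\rangle^k$ is a $k$-replica quantity, not the $k$-th power of a one-replica quantity. Let $\mu$ be the empirical distribution over $i$ of $(\sigma^1_i,\ldots,\sigma^k_i)\in\{\pm1\}^k$. Then $\Erw_e\langle\phi_e\rangle^k=\bigl\langle\sum_{\tau\in\{\pm1\}^k}\mu(\tau)^r\bigr\rangle$, up to $O(1/n)$ from distinctness of the vertices. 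What must be verified is convexity of $\mu\mapsto\sum_\tau\mu(\tau)^r$, which holds. Convexity of $(x^r+(1-x)^r)^k$ in the magnetization of a single configuration is a different statement and is not what enters the derivative.

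With both corrections, and with $\beta\to\infty$ taken at fixed $n$ so that the $O(n/\beta)$ error disappears, your route gives a complete proof.
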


\ignore{
\yd{Maybe we should specify that $m(F,p)$ is a deterministic scalar.}\dg{good point. Done}
}

Our proof approach is based on similar results 
for the contiguous model $\Hg(n,q), q=p^s$ which are already known in the literature. Specifically, the existence of $m(F,c)$ such that 
\begin{align*}
\frac{{\rm MinM}(\Hg(n,q),F)}{n}
\to m(F,c),
\end{align*}
is already known~\cite{shabanov2021maximum} based on the combinatorial interpolation technique introduced in~\cite{bayati2010combinatorial} and used
for establishing the existence of
such limits.
For similar contiguity reasons 
we also have that  
$m(F,c)=0$ when $c$ is 
sufficiently small positive 
constant $c$, and $m(F,c)>0$ when $c$ is sufficiently large. 
The former claim  is
obtained by choosing $c$ small enough so that
the  random graph $\Hg(n,q)$ does not contain
a giant component. In this case the 
locally tree-like
structure of the graph allows for cutting nearly
every $F$-edge  of the graph. Conversely, when
$c$ is sufficiently large, the 
non-existence of near
perfect bipartition follows from a simple union
bound. We regard these observations as folklore 
and will not provide a formal verification of 
these claims.

We conjecture that the value 
${\rm MinM}(\G(n,p),F){n}$ also undergoes a different type of phase transition. Specifically, we conjecture the existence of $c^*$ which 
depends on $F$ only such that this value is $0$ w.h.p. as $n\to\infty$
when $c<c^*$, and is positive  when $c>c^*$ also w.h.p.
The basis for this conjecture is a similar conjecture for random K-SAT (and several other related models such as proper coloring
of a random graph), which was proven for large $K$~\cite{ding2022proof}, but is still open for general $K$.

Our next result concerns obtaining explicit 
asymptotic limit
values when the size of the host graph $F$ grows.  
Introduce a short-hand notation
\begin{align*}
\kappa(F,c,s,r)={c^s\over 2^{r-1}\aut(F)}.
\end{align*}

\begin{theorem}\label{thm-limit}
   Under the assumption of \Cref{thm-convergence},
\begin{align*}
       m(F,c)=\kappa(F,c,s,r)+
    (1+o_r(1))\sqrt{(2\log 2)\kappa(F,c,s,r)}
       +o_{c}(c^{s/2}).
   \end{align*}
\end{theorem}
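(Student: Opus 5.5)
By \Cref{thm-convergence} and the coupling of Proposition~\ref{prop:coupling}, it suffices to compute the limit of ${\rm MinM}(\Hg_F(n,q),F)/n$ with $q=c^sn^{1-r}$. In $\Hg_F(n,q)$ the $F$-hyperedges are independent. Their number is asymptotically Poisson with mean
\begin{align*}
\binom{n}{r}\frac{r!}{\aut(F)}\,q=(1+o(1))\,\frac{c^s}{\aut(F)}\,n .
\end{align*}
So $\Hg_F(n,q)$ is essentially a sparse random $r$-uniform hypergraph with $dn$ edges, $d=c^s/\aut(F)$, where each edge carries the vertex set of a copy of $F$. The monochromatic status of a copy depends only on its vertex set, so ${\rm MinM}$ is the ground-state energy of a diluted $r$-spin model. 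For $\sigma\in\{\pm1\}^n$ the energy is
\begin{align*}
H(\sigma)=\sum_{e}\mathbf 1\{\sigma \text{ constant on } e\}.
\end{align*}
A uniformly random $\sigma$ makes each edge monochromatic with probability $2^{1-r}$. The mean term is therefore $2^{1-r}d=\kappa(F,c,s,r)$, and the task is to identify the fluctuation correction $\sqrt{(2\log2)\kappa}$.

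The plan has two steps.

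\emph{Step 1: pass from the sparse model to a dense Gaussian model.} I would use the interpolation/universality method for diluted spin glasses of Dembo--Montanari--Sen~\cite{dms2017} and Sen~\cite{sen2018optimization}. For fixed $r$ and large $d$, the ground state of the sparse model satisfies
\begin{align*}
\frac{\min_\sigma H(\sigma)}{n}=2^{1-r}d+\sqrt{d\,v_r}\;\frac{\min_\sigma H^{G}(\sigma)}{n}+o(\sqrt d).
\end{align*}
Here $v_r=2^{1-r}(1-2^{1-r})$ is the variance of a single indicator. $H^G$ is the Gaussian process on the hypercube whose covariance matches that of the centered sparse Hamiltonian:
\begin{align*}
\Erw\, H^G(\sigma)H^G(\tau)=n\,\xi(R_{\sigma\tau}),
\end{align*}
where $\xi$ is computed from the probability that both $\sigma$ and $\tau$ are constant on a random $r$-set. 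That probability equals
\begin{align*}
\bigl[((1+R)/2)^r+((1-R)/2)^r\bigr]\cdot 2^{1-r}\cdot(\text{normalization}),
\end{align*}
where $R$ is the overlap. Steps to verify: Lindeberg-type comparison of free energies at inverse temperature $\beta$, then $\beta\to\infty$ uniformly in $d$. The $o_c(c^{s/2})$ error, i.e.\ $o(\sqrt d)$, comes exactly from this step.

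\emph{Step 2: analyse the Gaussian ground state as $r\to\infty$.} The mixture $\xi(R)$, after subtracting its constant part (which is absorbed into the mean term), behaves like $\xi(R)/\xi(1)\approx R^r$-type terms. So as $r\to\infty$ the model approaches the Random Energy Model, whose normalized ground state equals $-\sqrt{2\log 2}$ times the standard deviation. For the upper bound on $|\min H^G|$, a first-moment/union bound over $2^n$ configurations gives $\ge -\sqrt{2\log 2\,\xi(1)}$. For the matching lower bound I would use the Parisi formula with a one-step RSB ansatz, or a second-moment argument, showing that the ground state is $-(1+o_r(1))\sqrt{2\log2\,\xi(1)}$ because $\xi(R)/\xi(1)\to0$ for $|R|<1$.

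Multiplying $\sqrt{d v_r}\sqrt{\xi(1)}$ and simplifying, with $1-2^{1-r}=1+o_r(1)$, gives $\sqrt{2\log2\cdot\kappa}\,(1+o_r(1))$. The sign here needs care: for a minimum the fluctuation term should come with a minus sign, so I would double-check whether the statement intends the ground state below the mean. The main obstacle I expect is Step 1: proving the sparse-to-Gaussian comparison with error $o(\sqrt d)$ uniform enough to combine with the $r\to\infty$ limit, since $d$ and $r$ interact through $\kappa$.
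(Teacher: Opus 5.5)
Your plan follows essentially the same route as the paper. The sparse-to-Gaussian step is what the paper imports from Sen's Lindeberg interpolation in Proposition~\ref{pro:H-V}. On balanced configurations the limiting Gaussian process has covariance proportional to $(1+R)^r+(1-R)^r-2$. The REM-type analysis as $r\to\infty$ is Lemma~\ref{lem-bound-max-normal}: a first-moment union bound for the upper bound, and a second moment over $S_0$ with a bivariate Gaussian tail bound for the lower bound. Your normalization $d v_r=\kappa(1-2^{1-r})$ matches the paper's $\kappa_2\, r!\,(2^{r-1}-1)c^s$.

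Your sign worry is justified. Proposition~\ref{pro:H-V} gives $\frac1n\min_\sigma H(\sigma)=\kappa-V_n+o(1)+o_c(c^{s/2})$. The paper then obtains $V_n=(1+o_r(1))\sqrt{(2\log 2)\kappa}+o(c^{s/2})$ with $V_n\ge 0$. The plus sign in the statement therefore comes from a slip in the final display of the paper's proof. There is also an independent check: averaging over uniform $\sigma$ gives $\min_\sigma H(\sigma)\le 2^{-n}\sum_\sigma H(\sigma)=2^{1-r}|E(\Hg_F(n,q))|$, so $m(F,c)\le\kappa$. That is incompatible with the stated formula once $c$ and $r$ are large. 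What your argument, and the paper's, actually proves is $m(F,c)=\kappa-(1+o_r(1))\sqrt{(2\log 2)\kappa}+o_c(c^{s/2})$.

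One step you leave implicit should be made explicit. Your Step~1 expansion has a $\sigma$-independent leading term $2^{1-r}d$ and a covariance depending only on the overlap. Both hold only for (nearly) balanced $\sigma$.
\begin{itemize}
\item If $\bar\sigma=m$, the expected energy per vertex exceeds $2^{1-r}d$ by about $d\,2^{1-r}\binom r2 m^2$.
\item The fluctuation variance is larger as well, by a factor of order $2^r$ when $\sigma$ is constant.
\end{itemize}
Consequently, a union bound over all $2^n$ configurations using the balanced variance is not legitimate. You first need the order-$d$ penalty to show that the minimizer has $\bar\sigma\to 0$ as $d\to\infty$. The paper does this through the constrained problem $T_n^\alpha$ and Lemma~\ref{cla-alp}, which gives an optimal $\alpha=O(d^{-1/2})$. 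Only then can you run the first-moment bound over $\bigcup_{|h|\le h_0}S_{nh}$, as in Lemma~\ref{lem-bound-max-normal}.
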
 
Namely, we obtain explicit limit values when both the number of nodes of the host graph $F$ and the leading coefficient $c$ of the random graph 
parameter diverge to infinity. The proof inspiration
is based on large-$p$
approximation technique
for $p$-spin glass models employed recently
in~\cite{gamarnik2025shattering}, and porting these
results to sparse random graphs as was done 
in~\cite{dms2017} 
and~\cite{sen2018optimization}. The latter two works relate $p$-spin models to sparse graphs using the Lindeberg's interpolation method.
Instead, we develop relevant asymptotics directly for sparse graphs using the second 
moment method employed 
in~\cite{gamarnik2025shattering}
for the mean field $p$-spin model.

\paragraph{Organization} The remainder of this paper is organized as follows. In \Cref{sec-hypergraph-coupling}, we introduce a coupling between the random $F$-graph and the Erdős–Rényi random graph, and prove \Cref{thm-convergence}. In \Cref{sec-spinglass}, we reduce the study of \( m(F,p) \) to the analysis of the maximum of a family of Gaussian random variables. Section~\ref{sec-maximizer}  provides auxiliary results that en route to deriving this maximum. Finally, we prove \Cref{thm-limit} in \Cref{sec-proof}.

 \section{Limit existence}\label{sec-hypergraph-coupling}
In this section, we prove \Cref{thm-convergence}. 
To count the number of specific subgraphs in a graph, we introduce the notion of an \emph{ordered copy} as follows: 
\begin{definition}[Ordered copy]
    Let \( F \) be a graph with vertex set \( V(F) = \{f_1, \ldots, f_r\} \). 
For any tuple of vertices \( (v_1, \ldots, v_r) \), we define the \emph{ordered copy} 
\( F(v_1, \ldots, v_r) \) to be a graph on the vertex set \( \{v_1, \ldots, v_r\} \), 
where for all \( i, j \in [r] \) an edge is placed between \( v_i \) and \( v_j \) if and only if there is an 
edge between \( f_i \) and \( f_j \) in \( F \). 
\ignore{\yd{I think the order of quantifiers should be: for any \( i, j \in [r] \), an edge is placed between \( v_i \) and \( v_j \) if and only if there is an 
edge between \( f_i \) and \( f_j \) in \( F \)}
\dg{done}
}
\end{definition}

\ignore{
\yd{Minor point but here we are conflating the mapping from ordered tuples \( \{v_1, \ldots, v_r\} \) to the subgraph with the subgraphs. Maybe we can first define the mapping from tuples to graphs and then define the set of ordered copies as the range of this mapping}\dg{good point. I made a change below}
}
We emphasize that even if the tuples \( (v_1, \ldots, v_r) \) and \( (v_1', \ldots, v_r') \) are different, it is possible that \( F(v_1, \ldots, v_r)\) and  \(F(v_1', \ldots, v_r') \) are isomorphic, in which case we do not distinguish between them. In particular, in subgraph counting, such copies will be counted only once.

Let us give an example.
\begin{example}\label{exa_1}
Consider the graphs \( G \) and $F$ below:
\begin{center}
\begin{tikzpicture}[scale=2, every node/.style={circle, draw, fill=white, inner sep=1pt}]

  \begin{scope}
    \node (g1) at (0,1) {1};
    \node (g2) at (1,1) {2};
    \node (g3) at (1,0) {3};
    \node (g4) at (0,0) {4};

    \draw (g1) -- (g2);
    \draw (g2) -- (g3);
    \draw (g3) -- (g4);
    \draw (g4) -- (g1);
    \draw (g1) -- (g3);
    \draw (g2) -- (g4);

    \node[draw=none, fill=none] at (0.5, -0.5) {\textbf{Graph \( G \)}};
  \end{scope}

  \begin{scope}[xshift=2.8cm]
    \node (f1) at (0,1) {1};
    \node (f2) at (1,1) {2};
    \node (f3) at (1,0) {3};
    \node (f4) at (0,0) {4};

    \draw (f1) -- (f2);
    \draw (f2) -- (f3);
    \draw (f3) -- (f4);
    \draw (f4) -- (f1);
    \draw (f1) -- (f3);

    \node[draw=none, fill=none] at (0.5, -0.5) {\textbf{Subgraph \( F \)}};
  \end{scope}

\end{tikzpicture}
\end{center}
The graph \( G \) contains exactly two ordered copies of \( F \), since \( F(1,2,3,4) \) remains the same under the exchange of positions between \( 1 \) and \( 3 \), or between \( 2 \) and \( 4 \), whereas \( F(1,2,3,4) \) and \( F(2,3,4,1) \) represent different ordered copies in $G$. 
\end{example}

The following key result from \cite{burghart2024sharp} helps us understand the ordered copies of a strictly 1-balanced graph $F$ in $\G(n,p)$, and establishes a connection to hyperedges in the random $F$-graph $\Hg_F(n,q)$:
\begin{proposition}[{\cite[Theorem 1.6]{burghart2024sharp}}]\label{thm-bur-1.6}\label{prop:coupling}
Let $F$ be a strictly 1-balanced graph with $s > 0$ edges on $r \geq 2$ vertices. There exist constants $\delta, \varepsilon > 0$ such that the following holds. Fix any two sequences $p,q$ satisfying $p \leq n^{-1/\rd_1(F) + \varepsilon}$ and $q \leq (1 - n^{-\delta})p^{s}$. Whp as $n\to\infty$ there exists a permutation $\tau: [n] \to [n]$ such that 
 for every $F(v_1, \ldots, v_r) \in E(\Hg_F(n,q))$ also $F(\tau(v_1), \ldots, \tau(v_r)) \subset \G(n,p)$.
\end{proposition}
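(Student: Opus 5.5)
The plan is to build the coupling sequentially, in the spirit of Riordan's embedding method for spanning structures. List all $N=\binom{n}{r}\frac{r!}{\aut(F)}$ potential copies of $F$ on $[n]$ in a uniformly random order $e_1,\ldots,e_N$. At step $t$ we will decide simultaneously whether $e_t\in E(\Hg_F(n,q))$ and whether the copy $e_t$ is present in a partially exposed $\G(n,p)$. The history $\mathcal{H}_{t-1}$ records, for each earlier copy, either \enquote{present} (all its $s$ edges are in $G$) or \enquote{absent} (at least one edge is missing). Set
\[
\pi_t=\PP\bigl(e_t\subset \G(n,p)\,\big|\,\mathcal{H}_{t-1}\bigr).
\]
Whenever $\pi_t\ge q$, we can couple a $\mathrm{Bernoulli}(q)$ variable with the event $\{e_t\subset G\}$ so that the first being $1$ forces the second. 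This gives the inclusion with $\tau$ the identity on that step. The permutation $\tau$ in Proposition~\ref{prop:coupling} absorbs the random vertex labelling used to symmetrise the order, so that the statement does not depend on the labelling. The whole proposition thus reduces to showing that whp $\pi_t\ge(1-n^{-\delta})p^s$ for every $t$, on a suitably defined good event $\mathcal{E}$. Outside $\mathcal{E}$ we let the two processes run independently.

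To bound $\pi_t$, we separate the history into positive information $P$ (edges known to be present) and negative information (a family $\mathcal{A}$ of copies known to be absent). Positive information only helps: by Harris' inequality, conditioning on $P$ increases $\PP(e_t\subset G)$ to at least $p^{s}$, and strictly more if $e_t$ shares edges with $P$. For the negative information, write $\pi_t=p^{s'}\,\PP(\text{no }f\in\mathcal A \text{ present}\mid e_t\subset G, P)/\PP(\text{no } f\in\mathcal A\text{ present}\mid P)$, with $s'\le s$. Only copies $f\in\mathcal A$ sharing at least one edge with $e_t$ can change this ratio. Using Harris once more for the decreasing events, the ratio is at least
\[
1-\sum_{f\in\mathcal A,\ E(f)\cap E(e_t)\neq\emptyset}\PP\bigl(E(f)\setminus E(e_t)\subset G\,\big|\,P\bigr).
\]
So we need the sum to be at most $n^{-\delta}$. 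Suppose $f$ overlaps $e_t$ in a subgraph $F'$ with $v'$ vertices and $s''$ edges. The number of such $f$, weighted by the probability of the missing edges, is at most $O(n^{r-v'}p^{s-s''})$. Strict 1-balancedness gives $s''/(v'-1)<\rd_1(F)$ whenever $F'\neq F$ and $v'\ge 2$. With $p\le n^{-1/\rd_1(F)+\varepsilon}$ and $\varepsilon$ small, this yields $n^{r-v'}p^{s-s''}\le n^{-2\delta}$. The case $F'=F$ with $f\neq e_t$ as labelled copies corresponds to the same unlabelled copy, which is exactly why ordered copies are identified up to $\aut(F)$.

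The main obstacle is that the above count must hold conditionally on $P$, that is, on the random set of edges already revealed as present. Edges in $P$ could in principle cluster and create many \enquote{nearly present} overlapping copies. I would handle this by defining $\mathcal{E}$ as the event that, in the final $\G(n,p)$, every pair consisting of a copy and an edge set $F'$ extends in only $O(n^{\delta})$ ways to partial copies with the prescribed number of present edges. Strict balancedness, together with a Kim--Vu or Spencer-type extension-count concentration (these counts have expectation $n^{-\Omega(1)}$ for all proper extensions), shows that $\mathcal{E}$ holds whp. On $\mathcal E$, summing over the finitely many overlap types $F'$ gives the required $n^{-\delta}$ bound uniformly in $t$. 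A union bound over the $N=O(n^r)$ steps then completes the argument.
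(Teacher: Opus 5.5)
The paper does not prove this proposition; it imports it from \cite[Theorem~1.6]{burghart2024sharp}. So your sketch has to stand on its own. Its first half is sound: the Bayes/Harris computation giving $\pi_t\ge p^{s}\bigl(1-\sum_{f}\PP(E(f)\setminus E(e_t)\subset G\mid P)\bigr)$, summed over $f\in\mathcal A$ sharing an edge with $e_t$, is correct. The gap is the reduction to a \emph{uniform} bound $\pi_t\ge(1-n^{-\delta})p^s$ for every $t$. That bound is false whp, so no good event $\mathcal E$ can deliver it.

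Here is a counterexample. Take $F=K_3$, which is strictly 1-balanced with $\rd_1=3/2$, and take $p=cn^{-2/3}$.
\begin{itemize}
\item The expected number of pairs of triangles $xwa$, $ywb$ of $\G(n,p)$ meeting only in $w$ is $\Theta(n^5p^6)=\Theta(n)$, and this count is concentrated.
\item For almost all such pairs, $xy\notin\G(n,p)$.
\item You test every copy. Once $xwa$ and $ywb$ (present) and $f=xyw$ (absent) have been tested, the history forces $xy\notin\G(n,p)$.
\item Hence every triangle $xyz$ tested afterwards has $\pi_t=0$. There are $n-3$ choices of $z$, so whp such a step exists.
\end{itemize}
In your sum, this $f$ contributes exactly $1$. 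An $O(n^{\delta})$, or even Spencer-type $O(1)$, bound on extension counts cannot push the sum below $n^{-\delta}$. You would need no such $f$ to exist for any $t$, and a per-copy expectation of $n^{-\Omega(1)}$ does not survive the union over $\Theta(n^r)$ copies $e_t$. The same obstruction occurs for $K_4$: take two copies sharing an edge $w_1w_2$ (there are $\Theta(n^{1/2})$ such pairs) together with $f$ on $\{x,y,w_1,w_2\}$. It also occurs for cycles.

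The missing idea is that uniformity in $t$ is not needed, but the damage must be summable.
\begin{itemize}
\item At a step with $\pi_t<q$, put $e_t$ into $H$ independently with probability $q$. Then $H$ still has the right law, and the coupling fails only when such a coin fires. So it suffices to show $\Erw\sum_t(q-\pi_t)^+=o(1)$.
\item Test $e_t$ only with probability $q/\pi_t$. Then the positive information is exactly the hyperedges of $H$, which form a binomial hypergraph, rather than all copies present in $\G(n,p)$.
\end{itemize}
With these changes the $K_4$ configuration becomes negligible, but the triangle one does not. There are still $\Theta(n)$ expected configurations with $xwa,ywb\in H$ and $xyw$ tested (absent) before them. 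Each produces $\Theta(n)$ steps with $\pi_t=0$ and deficit $q=\Theta(n^{-2})$. So the expected deficit is $\Theta(1)$ already at $p=cn^{-2/3}$, and it grows polynomially at $p=n^{-2/3+\varepsilon}$. The analogous count also gives $\Theta(1)$ for every cycle $C_r$.

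Overcoming this obstruction is part of the content of the cited theorem. Your outline, culminating in a good event plus a union bound over steps, does not address it, so as written it does not prove the proposition.
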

\ignore{
\yd{I think the order should be "whp as $n\to\infty$, here exists a permutation $\tau: [n] \to [n]$". The permutations aren't fixed beforehand.}
\dg{done}
\yd{Can't $\epsilon, \delta$ be arbitrarily small?}
\dg{We don't want them to be small. In fact the larger is $\epsilon$ the stronger the result. $\epsilon$ controls how high above $n^{-{1\over d_1(F)}}$ we can go and theorem still be valid. Makes sense?}
\yd{}
}
In the above $A\subset B$ means $A$ is a subgraph of $B$ (not necessarily induced one). 
\Cref{thm-bur-1.6} implies  that if we choose $q = (1 - n^{-\delta})p^{s}$, each $F$-hyperedge in $\Hg_F(n,q)$ corresponds to an ordered copy of $F$ in $\G(n,p)$. If we instead choose $q = p^s$, then, on the one hand, the number of $F$-hyperedges in $\Hg_F(n,q)$ does not increase too much, while on the other hand, it is close to the expected number of ordered copies of $F$ in $\G(n,p)$. In particular,
we claim the following.

\begin{corollary}\label{cor-erg-hg}
Let $F$ be a strictly 1-balanced graph with $s > 0$ edges on $r \geq 2$ vertices. Fix $c > 0$, and let $p = c n^{-1/\rd_1(F)}$,  $q = p^s = c^s n^{1-r}$.  Whp as $n\to\infty$, there exists a permutation
$\tau:[n]\to [n]$ such that the number of distinct ordered copies $F(v_1, \ldots, v_r)$ for which exactly one of the events $F(v_1, \ldots, v_r) \in E(\Hg_F(n,q))$ or $F(\tau(v_1), \ldots, \tau(v_r)) \subset \G(n,p)$ holds, is  $o(n)$.
\end{corollary}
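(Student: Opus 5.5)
The plan is a sandwich: apply Proposition~\ref{prop:coupling} to a slightly thinned hypergraph, then show that the leftover discrepancies on both sides are sparse. Set $q' = (1-n^{-\delta})p^s$. Realize $\Hg_F(n,q')$ and $\Hg_F(n,q)$ on one space: keep each hyperedge of $\Hg_F(n,q)$ independently with probability $1-n^{-\delta}$. This gives $\Hg_F(n,q')\subset \Hg_F(n,q)$ and the correct law for the thinned graph. Since $p=cn^{-1/\rd_1(F)}\le n^{-1/\rd_1(F)+\varepsilon}$ for large $n$, Proposition~\ref{prop:coupling} applies to the pair $(p,q')$. So whp there is a permutation $\tau$ such that every hyperedge $F(v_1,\dots,v_r)$ of $\Hg_F(n,q')$ satisfies $F(\tau(v_1),\dots,\tau(v_r))\subset\G(n,p)$. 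Fix this $\tau$. An ordered copy for which exactly one event holds falls into one of two classes:
\begin{itemize}
\item[(a)] copies in $E(\Hg_F(n,q))$ whose $\tau$-image is not in $\G(n,p)$;
\item[(b)] copies whose $\tau$-image lies in $\G(n,p)$ but which are not hyperedges of $\Hg_F(n,q)$.
\end{itemize}

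\textbf{Class (a).} Every such copy lies in $E(\Hg_F(n,q))\setminus E(\Hg_F(n,q'))$. By \eqref{eq:N-trials} the number of potential copies is $N=\Theta(n^r)$, so $\Erw|E(\Hg_F(n,q))| = Nq = \Theta(n)$. The removed hyperedges form a binomial variable with mean $Nqn^{-\delta}=O(n^{1-\delta})$. Markov's inequality then shows class (a) has size $o(n)$ whp.

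\textbf{Class (b).} Let $X$ be the number of ordered copies of $F$ in $\G(n,p)$, and let $Y$ be the number of hyperedges of $\Hg_F(n,q')$. The map $e\mapsto\tau(e)$ is injective on ordered copies, and whp it sends the $Y$ hyperedges of $\Hg_F(n,q')$ into distinct copies in $\G(n,p)$. Hence the number of copies in $\G(n,p)$ not hit by $\tau(E(\Hg_F(n,q')))$ is $X-Y\ge 0$. Class (b) is contained in this set, so it suffices to show $X-Y=o(n)$ whp.

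$Y$ is binomial with mean $Nq' = Nq(1-o(1))=\mu(1-o(1))$, where $\mu := Nq = \Theta(n)$, and it concentrates: $Y=\mu+o(n)$ whp. Also $\Erw X = Np^s=\mu$ exactly. The remaining and main step is to show $X=\mu+o(n)$ whp, i.e.\ a second-moment (or Janson-type) bound $\mathrm{Var}(X)=o(n^2)$. I would write $\mathrm{Var}(X)$ as a sum over pairs of copies sharing at least one edge. For a pair overlapping in a proper subgraph $F'$ with $v'$ vertices and $e'$ edges, the contribution is of order $n^{2r-v'}p^{2s-e'} = \mu^2\, n^{-v'}p^{-e'}$. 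Since $F$ is strictly 1-balanced, $e'/(v'-1)<\rd_1(F)$, which gives $n^{v'}p^{e'} = c^{e'} n^{v'-e'/\rd_1(F)} \ge c^{e'}n^{1+\eta}$ for some $\eta>0$. So each such term is $O(\mu^2 n^{-1-\eta}) = o(n)$. The diagonal term, where the two copies coincide, is $O(\mu)=O(n)$. Hence $\mathrm{Var}(X)=O(n)=o(n^2)$, and Chebyshev's inequality gives $X=\mu+o(n)$ whp.

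Combining, $X-Y=o(n)$ whp, so class (b) has size $o(n)$. Classes (a) and (b) together give the corollary. The care needed is in the overlap bookkeeping for the variance. Pairs sharing vertices but no edges are independent and contribute nothing, so only $e'\ge1$ matters. The constraint $v'\ge 2$ holds automatically whenever $e'\ge 1$.
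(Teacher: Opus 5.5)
Your proposal is correct and follows essentially the same route as the paper: thin $\Hg_F(n,q)$ to $\Hg_F(n,(1-n^{-\delta})q)$, apply Proposition~\ref{prop:coupling}, and use Chebyshev on the binomial hyperedge count and on the number of copies of $F$ in $\G(n,p)$. The paper only asserts that this count has $\Theta(n)$ variance, whereas you justify it via strict 1-balancedness and also give an explicit Markov bound for the removed hyperedges (your class (a)), a step the paper leaves implicit.
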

\ignore{
\yd{Again the existence of permutation is whp}
\dg{done}
}
The proof crucially uses the fact that
for strictly 1-balanced graph  that when $p=O(n^{-{1/d_1(F)}})$, the number of pairs of distinct ordered copies of $F$ in $\G(n,p)$ which share at least one edge is small. 
\ignore{
\yd{remove "ensures that"}\dg{done}
}

Specifically, cases like \Cref{exa_1} occur only rarely.
 This property is essential for the validity of \Cref{thm-bur-1.6} and \Cref{cor-erg-hg}.

\begin{proof}[Proof of Corollary~\ref{cor-erg-hg}]
\ignore{\yd{Let $\delta>0$  be as in Proposition~\ref{prop:coupling}.}\dg{done}}
Let $\delta>0$  be as in Proposition~\ref{prop:coupling}.
We view $H'\triangleq \Hg_F(n,(1-n^{-\delta})q)$ 
as a subgraph of $\Hg_F(n,q)$. 
\ignore{\yd{Typo: should be $n^{-\delta}$}\dg{done}}

Let
\[
g(n) = \binom{n}{r} \frac{r!}{\aut(F)} p^s = \binom{n}{r} \frac{r!}{\aut(F)} c^s n^{1 - r} = \Theta(n)
\]
denote the expected number of $F$-edges in 
\( \Hg_F(n,q) \).  Let $\tau$ be the permutation
for which the event in 
Proposition~\ref{prop:coupling} holds \whp~. Let \( \mathcal{F} \) denote the event that, 
for each \( F(v_1, \ldots, v_r) \in E(H') \), 
we have \( F(\tau(v_1), \ldots, \tau(v_r)) 
\subset \G(n,p) \), and that
\[
\abs{E(H')} \geq g(n)(1 - n^{-\delta}) - n^{2/3}.
\]
Then, \Cref{thm-bur-1.6} yields that
\begin{align}\label{eq-Sc-decom}
    \PP\left(\mathcal{F}^c\right) \leq o(1) + \PP\left( \abs{E(H')} \leq g(n)(1 - n^{-\delta}) - n^{2/3} \right).
\end{align}
We have that $E(H')$ has a binomial
distribution with the number of trials
given by (\ref{eq:N-trials}) and success
probability $(1-n^{-\delta})q$. The mean and variance are $\Theta(n)$ each, and thus 
by Chebyshev's inequality the probability term
in \eqref{eq-Sc-decom} is $o(1)$ as well.
We conclude $\PP(\mathcal{F}^c)=o(1)$.

Let $N(\G,F)$ be the number of subgraphs 
of $\G(n,p)$ isomorphic to $F$. We have
$\Erw[N(\G,F)]=g(n)=\Theta(n)$ and
the variance also $\Theta(n)$.  Thus, similarly, by Chebyshev's bound
\whp~ $N(\G,F)\le g(n)+n^{2\over 3}$.
Combining this with $\PP(\mathcal{F})=1-o(1)$, and $g(n)n^{-\delta}=o_n(n)$, we obtain the result.
\end{proof}
\ignore{\zhu{I have a question about notation: should we write $o(1)$ or $o_n(1)$ }
\dg{Good point. I have changed to $o_n(n)$}}

We  now prove \Cref{thm-convergence}.
\begin{proof}[Proof of \Cref{thm-convergence}]
We invoke \cite[Theorem 1]{shabanov2021maximum}
which states the following. Consider a random $r$-uniform hypergraph $\Hg_{K_r}(n,\bar q)$ where
$\bar q=\bar cn^{1-r}$ for some fixed $\bar c>0$. There exists
$m(r,\bar c)$ such that
\begin{align}
{{\rm MinM}(\Hg_{K_r}(n,\bar q),K_r)\over n}
\to m(r,\bar c), \label{eq:limit-K_r}
\end{align}
\whp~ as $n\to\infty$. In other words, the claim
of the theorem holds for $F=K_r$.

We set $\bar c={r!/\aut(F)}c$ and with corresponding $\bar q=\bar cn^{1-r}$ and consider a natural coupling of $\Hg_F(n,q)$ 
and $\Hg_{K_r}(n,\bar q)$ constructed as follows.
Given a sample of $\Hg_F(n,q)$ for each $r$-subset
$B$ of $[n]$ we create an $K_r$ edge of out of $B$ iff at least one 
of $r!/\aut(F)$ copies of $F$ is supported on $B$
in $\Hg_F$. Clearly appearance of hyperedges
$K_r$ in this construction is independent across
different sets $B$. Also
we have that for each fixed $B$, the number of 
such copies is $1$ with probability 
$(r!/\aut(F))q(1+o(1))=\Theta(n^{1-r})$, 
is at least two with probability $O(n^{2-2r})$
and is zero with probability $1-(r!/\aut(F))q(1+o(1))$. Thus we obtain a random
graph $\Hg_{K_r}(n,\tilde q)$ where 
$\tilde q=\bar q(1+o(1))$.
We can further couple
this graph with $\Hg_{K_r}(n,\bar q)$
by adding or deleting copies of $K_r$. The
 number of such copies is 
$o(n)$ whp as $n\to\infty$, since $\tilde q-\bar q=o(1)$.
\ignore{\yd{Use of whp here unclear}\dg{changed the wording above. Clear now?}}
By our construction, \whp~,
\begin{align*}
{\rm MinM}(\Hg_F(n,q))=
{\rm MinM}(\Hg_{K_r}(n,\tilde q))+o(n)=
{\rm MinM}(\Hg_{K_r}(n,\bar q))+o(n).
\end{align*}
Applying (\ref{eq:limit-K_r}), we obtain
the result by defining  $m(F,c)=m(r,\bar c)$.
\end{proof}

\section{Reduction to spin glass model}\label{sec-spinglass}
In this section reformulate the problem
of minimizing the number of monochromatic subgraphs in terms of an associated version 
of a spin glass problem.
 
Given \( p = c n^{-1/\rd_1(F)} \), \( q = p^s = c^s n^{1 - r} \), $\G(n,p)$, and \(\Hg_F(n, q) \), we assign to each vertex \( i \in [n] \) a spin \( \sigma_i \in \{1, -1\} \), thereby inducing a bicoloring of the vertex set \( [n] \). Let \( \sigma = (\sigma_i)_{i \in [n]} \). Define \( H(\sigma) \) to be the number of hyperedges \( e \in E(\Hg_F(n,q)) \) such that all vertices in \( e \) receive the same spin value \ignore{\yd{spin value}
\dg{done}}. Namely, for all
$e$ contributing to $H(\sigma)$ \ignore{\yd{for all $e$?}} (viewed as $r$-subset of $[n]$), 
and all $i,j\in e$, it holds $\sigma_i=\sigma_j$.
In particular, $\min_\sigma H(\sigma)=
{\rm MinM}(\Hg_F(n,q))$.

Let $\mathcal{S}[k]$ denote the group of permutation on $k$ elements.

\begin{lemma}\label{lem:spin-form-H}
For any $\sigma\in\cbc{-1,1}^n$, 
\begin{align*}
H(\sigma) 
=& \frac{1}{\aut(F) \cdot 2^{r-1} r!} \sum_{v_1 \neq \cdots \neq v_r} \sum_{\rho \in \mathcal{S}[r]} \indic{F(v_{\rho(1)}, \ldots, v_{\rho(r)}) \in E(\Hg_F(n,q))} 
\left( \sum_{m = 1}^{\lfloor r/2 \rfloor} \sum_{\substack{R \subset [r] \\ |R| = 2m}} \prod_{j \in R} \sigma_{v_j} \right)\\
&+ \frac{|E(\Hg_F(n,q))|}{2^{r-1}}.
\end{align*}
\end{lemma}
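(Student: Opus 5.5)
The plan is to use the standard Fourier-type expansion of the "all spins equal" indicator on $r$ spins, and then to convert the sum over hyperedges into a symmetrized sum over ordered tuples of distinct vertices, keeping track of the multiplicity.

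\emph{Step 1 (pointwise identity).} For $x_1,\ldots,x_r\in\{-1,1\}$ we have
\begin{align*}
\mathbf{1}\{x_1=\cdots=x_r\}=\prod_{j=1}^r\frac{1+x_j}{2}+\prod_{j=1}^r\frac{1-x_j}{2}.
\end{align*}
Expanding both products gives $2^{-r}\sum_{R\subset[r]}\bigl(1+(-1)^{|R|}\bigr)\prod_{j\in R}x_j$. Odd-size subsets cancel and even-size subsets are doubled, so
\begin{align*}
\mathbf{1}\{x_1=\cdots=x_r\}=2^{1-r}\Bigl(1+\sum_{m=1}^{\lfloor r/2\rfloor}\sum_{R\subset[r],\,|R|=2m}\prod_{j\in R}x_j\Bigr).
\end{align*}
The right-hand side is symmetric in $(x_1,\ldots,x_r)$, so it depends only on the set $\{x_j\}$ viewed as spins of a vertex set.

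\emph{Step 2 (sum over hyperedges).} By definition $H(\sigma)=\sum_{e\in E(\Hg_F(n,q))}\mathbf{1}\{\sigma\text{ constant on }e\}$. Applying Step 1 to the spins on the vertex set of each $e$, the constant term contributes $|E(\Hg_F(n,q))|/2^{r-1}$. What remains is $2^{1-r}$ times the sum over hyperedges of the even-subset polynomial evaluated on the vertices of $e$.

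\emph{Step 3 (multiplicity count).} This is the only delicate point. I would rewrite the remaining sum over hyperedges as the sum over ordered tuples $v_1\neq\cdots\neq v_r$ and $\rho\in\mathcal S[r]$ in the statement.

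Fix a hyperedge $e$, which is an ordered copy of $F$ on a vertex set $B$ with $|B|=r$. The number of tuples $(w_1,\ldots,w_r)$ of distinct vertices with $F(w_1,\ldots,w_r)=e$ equals $\aut(F)$, since such tuples differ by automorphisms of $F$. For a fixed tuple $w$, the pairs $(v,\rho)$ with $(v_{\rho(1)},\ldots,v_{\rho(r)})=w$ number exactly $r!$: $\rho$ is arbitrary and $v$ is then determined. Hence each hyperedge is counted $\aut(F)\cdot r!$ times in $\sum_{v}\sum_\rho\mathbf{1}\{F(v_{\rho(1)},\ldots,v_{\rho(r)})\in E\}$.

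In every such term the polynomial $\sum_{m}\sum_{|R|=2m}\prod_{j\in R}\sigma_{v_j}$ is indexed by positions of $v$ rather than of $v\circ\rho$. By the symmetry noted in Step 1, however, it equals the even-subset polynomial of the spins on $B$. Dividing by $\aut(F)\,r!$ and multiplying by $2^{1-r}$ therefore gives the stated formula.

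The main obstacle is this bookkeeping: verifying the $\aut(F)$-to-one correspondence between labelled tuples and ordered copies under the paper's convention that isomorphic ordered copies on the same labelled set are identified. It is consistent with the count \eqref{eq:N-trials} of potential copies, $\binom nr r!/\aut(F)$.
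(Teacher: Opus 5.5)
Your proposal is correct and follows essentially the paper's proof: the same expansion of $\prod_j\frac{1+\sigma_{v_j}}{2}+\prod_j\frac{1-\sigma_{v_j}}{2}$ into even-size subsets, then rewriting the hyperedge sum as a sum over ordered tuples of distinct vertices. The only difference is cosmetic. You count the multiplicity $\aut(F)\cdot r!$ in one step and state explicitly that the symmetry of $f$ justifies the $\rho$-average. The paper instead first writes the sum as $\frac{1}{\aut(F)}\sum_{v_1\neq\cdots\neq v_r}$ and then averages over $\mathcal{S}[r]$.
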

\begin{proof}
We have
\begin{align*}
H(\sigma) 
= \sum_{e \in E(\Hg_F)} \left[ \indic{\sigma_i = 1 \;\; \forall i \in e} + \indic{\sigma_i = -1 \;\; \forall i \in e} \right].
\end{align*}
Since each hyperedge corresponds to an ordered copy of \( F \), we can express this sum as
\begin{align*}
H(\sigma) 
= \frac{1}{\aut(F)} \sum_{v_1 \neq \cdots \neq v_r\in [n]} 
\indic{F(v_1, \ldots, v_r) \in E(\Hg_F)} \left[ \indic{\sigma_{v_j} = 1 \;\; \forall j \in [r]} + \indic{\sigma_{v_j} = -1 \;\; \forall j \in [r]} \right].
\end{align*}

Next, using  \( \sigma_i \in \{-1, 1\} \), the indicator functions can be rewritten in algebraic form:
\begin{align*}
\indic{\sigma_{v_j} = 1 \;\; \forall j \in [r]} = \prod_{j=1}^r \frac{1 + \sigma_{v_j}}{2}, \qquad
\indic{\sigma_{v_j} = -1 \;\; \forall j \in [r]} = \prod_{j=1}^r \frac{1 - \sigma_{v_j}}{2}.
\end{align*}

Substituting these expressions, we obtain
\begin{align*}
H(\sigma) 
= \frac{1}{\aut(F)} \sum_{v_1 \neq \cdots \neq v_r} \indic{F(v_1, \ldots, v_r) \in E(\Hg_F)} \left[ \prod_{j=1}^r \frac{1 + \sigma_{v_j}}{2} + \prod_{j=1}^r \frac{1 - \sigma_{v_j}}{2} \right].
\end{align*}

We now expand the sum of products as follows:
\[
\prod_{j=1}^r \frac{1 + \sigma_{v_j}}{2} + \prod_{j=1}^r \frac{1 - \sigma_{v_j}}{2}
= \frac{1}{2^{r-1}} \left( \sum_{m = 1}^{\lfloor r/2 \rfloor} \sum_{\substack{R \subset [r] \\ |R| = 2m}} \prod_{j \in R} \sigma_{v_j} \right) + \frac{1}{2^{r-1}}.
\]

We obtain
\begin{align*}
H(\sigma) 
= \frac{1}{\aut(F) \cdot 2^{r-1}} 
\sum_{v_1 \neq \cdots \neq v_r\in [n]} \indic{F(v_1, \ldots, v_r) \in E(\Hg_F)} \left( \sum_{m = 1}^{\lfloor r/2 \rfloor} \sum_{\substack{R \subset [r] \\ |R| = 2m}} \prod_{j \in R} \sigma_{v_j} \right)
+ \frac{|E(\Hg_F)|}{2^{r-1}}.
\end{align*}
Let \( \mathcal{S}[m] \) denote the set of all permutations on \( [m] \).  To symmetrize over labelings of \( F \), we average over permutations in \( \mathcal{S}[r] \). This yields:
\begin{align*}
H(\sigma) 
=& \frac{1}{\aut(F) \cdot 2^{r-1} r!} \sum_{v_1 \neq \cdots \neq v_r} \sum_{\rho \in \mathcal{S}[r]} \indic{F(v_{\rho(1)}, \ldots, v_{\rho(r)}) \in E(\Hg_F)} 
\left( \sum_{m = 1}^{\lfloor r/2 \rfloor} \sum_{\substack{R \subset [r] \\ |R| = 2m}} \prod_{j \in R} \sigma_{v_j} \right)\\
&+ \frac{|E(\Hg_F)|}{2^{r-1}},
\end{align*}
as claimed.
\end{proof}
The remainder of the proof is based on the following ideas. We replace the random variable \( \{ \sum_{\rho \in \mathcal{S}[r]} \indic{F(v_{\rho(1)}, \ldots, v_{\rho(r)}) \in E(\Hg_F)}  \}_{v_1 \neq \cdots \neq v_r} \) by  Gaussian random variables with matching mean and variance. 
\ignore{\yd{I'm not sure if the independence is clear here. Isn't the independence only upto symmetry?}\dg{deleted ''independence''} \yd{$E(R)$ or $E(\Hg_F)?$}
\dg{good catch. Fixed}}

Then \( H(\sigma) \)  itself is also Gaussian for each fixed configuration \( \sigma \).  

Since there are \( 2^n \) possible spin configurations \( \sigma \in \{-1, 1\}^n \), this  yields a family of \( 2^n \) correlated Gaussian random variables \( \{ H(\sigma) \}_{\sigma \in \{-1, 1\}^n} \).  
Next we will control the covariance matrix of these Gaussian random variables and the reduce the optimization problem to the variational problem over correlated Gaussians for which are able to obtain asymptotic values. The justification of the Gaussian substitution is done using Lindeberg's interpolation method when $c\to\infty$ as carried out in \cite{sen2018optimization}. Thus our next goal is to restate the problem in a setup accorded by~\cite{sen2018optimization}.

Let \( \kappa_1 = -\frac{1}{\aut(F)\cdot 2^{r-1}} \), \( \kappa_2 = \frac{1}{2^{2r - 2} r! \aut(F)} \), and let \( d = c^s \).
Define the symmetric polynomial
\begin{align}
f(x_1, \ldots, x_r) = \sum_{m = 1}^{\lfloor r/2 \rfloor} \sum_{\substack{R \subset [r] \\ |R| = 2m}} \prod_{j \in R} x_j.
\end{align}
Let \( \{ J_{v_1, \ldots, v_r}: v_1, \ldots, v_r \in [n] \}\) be a symmetric array of 
random variables where 
\[
\left\{ J_{v_1, \ldots, v_r} : 1 \leq v_1 \leq \cdots \leq v_r \leq n \right\}
\]
are independent standard normal variables. I.e., \( J_{v_1, \ldots, v_r} \sim \mathcal{N}(0, 1) \) independently over $1 \leq v_1 \leq \cdots \leq v_r \leq n$.
Define the Gaussian random variable 
\begin{align*}
U_n(\sigma) = \sqrt{r!} \sum_{1 \leq v_1 \leq \cdots \leq v_r \leq n} g(v_1, \ldots, v_r) \, J_{v_1, \ldots, v_r} \, f(\sigma_{v_1}, \ldots, \sigma_{v_r}),
\end{align*}
where the symmetry factor \( g(v_1, \ldots, v_r) \) is defined by
\begin{align*}
g(v_1, \ldots, v_r) = \sqrt{ \left| \left\{ (v_{\rho(1)}, \ldots, v_{\rho(r)}) : \rho \in \mathcal{S}_r \right\} \right| }.
\end{align*}
We note that $g$ is simply a function of the number of distinct elements among $v_1,\ldots,v_r$.
We note that the above definition of \( g(v_1, \ldots, v_r) \) ensures that for any function
$h:[n]^r\to \R$, it holds
\begin{align}\label{eq-g-sum-property}
    \sum_{1 \leq v_1 \leq \ldots \leq v_r \leq n} g^2(v_1, \ldots, v_r) h(v_1, \ldots, v_r)
    = \sum_{v_1, \ldots, v_r \in [n]} h(v_1, \ldots, v_r).
\end{align}

\ignore{
\yd{I would briefly explain here why the $\alpha$ part is separated out in terms of order of fluctuations. (It's like the magnetization while the rest is the spin-glass part). }
\dg{don't see a clean way to phrase this. The audience is mostly folks in combinatorics/random graphs. Phrases like magnetization better be explained. If you have
an idea how to say it, go for it.}}

Introduce the constrained variational problem\footnote{The term $\sqrt{r!}g(v_1,\ldots,v_r)$ appears because the summation is ordered (with $v_1 \leq \ldots \leq v_r$). In \cite[equation (1.3)]{sen2018optimization}, Sen defined $T_n^\alpha$ without this ordering condition (using $v_1 \neq \ldots \neq v_r$ instead):
\[
T_n^\alpha = \frac{1}{n^{(r+1)/2}} \max_{\sigma \in \{-1,1\}^n} \sum_{v_1 \neq \dots \neq v_r} J_{v_1, \dots, v_r} f(\sigma_{v_1}, \cdots, \sigma_{v_r}).
\]
Hence, comparing it with \eqref{def-T-alp}, we need an extra factor of $r!$ for each unordered tuple $v_1 \neq \ldots \neq v_r$. }
\begin{equation}\label{def-T-alp}
    \begin{aligned}
        T_n^\alpha = n^{-(r+1)/2} \sqrt{\kappa_2} \max_{\sigma \in \{-1,1\}^n} U_n(\sigma),\\ 
\text{subject to} \quad
n^{-r} \sum_{v_1, \ldots, v_r \in [n]} \kappa_1 f(\sigma_{v_1}, \ldots, \sigma_{v_r}) = \alpha.
    \end{aligned}
\end{equation}
For completeness, the optimal value is set to $-\infty$ if the constraint above is infeasible.

Finally, define
\begin{align}\label{eq:Vn}
V_n = \mathbb{E} \left[ \max_\alpha \left( \alpha c^s + T_n^\alpha c^{s/2} \right) \right].
\end{align}

We now reduce the value of \( \frac{1}{n} \min_{\sigma \in \{-1,1\}^n} H(\sigma) \) to the optimization problem (\ref{eq:Vn}). 

\begin{proposition}\label{pro:H-V}
\whp~ as $n\to\infty$
\[
\frac{1}{n} \min_{\sigma \in \cbc{-1,1}^n} H(\sigma) 
= -V_n + \frac{c^s}{2^{r-1} \aut(F)} + o(1) + o_{c}(c^{s/2}).
\]
\end{proposition}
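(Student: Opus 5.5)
The plan is to start from \Cref{lem:spin-form-H} and split $H(\sigma)$ into a deterministic ``mean'' part and a centered ``fluctuation'' part. Then $-\frac1n\min_\sigma H(\sigma) = \frac1n\max_\sigma(-H(\sigma))$ can be placed in the framework of \cite{sen2018optimization}.

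\textbf{Step 1: split $H(\sigma)$.} Write $X_{v_1,\ldots,v_r}=\sum_{\rho\in\mathcal S[r]}\indic{F(v_{\rho(1)},\ldots,v_{\rho(r)})\in E(\Hg_F)}$. This is a symmetric array, and the entries indexed by distinct unordered $r$-sets are independent. Each entry is a sum of $r!/\aut(F)$ distinct potential copies, counted with multiplicity $\aut(F)$. Hence
\[
\Erw X = r!\,q = r!\,c^s n^{1-r}, \qquad \mathrm{Var}(X)\approx r!\,\aut(F)\,c^s n^{1-r}.
\]
The last term of the lemma, $|E(\Hg_F)|/2^{r-1}$, is binomial with mean $g(n)/2^{r-1}=\frac{c^s n}{2^{r-1}\aut(F)}(1+o(1))$ and variance $\Theta(n)$. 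By Chebyshev it equals $n\kappa(F,c,s,r)+o(n)$ whp; this produces the constant term. The remaining part of $H$ is
\[
\frac{1}{\aut(F)2^{r-1}r!}\sum X\, f(\sigma)
= \frac{\Erw X}{\aut(F)2^{r-1}r!}\sum f(\sigma) + \frac{1}{\aut(F)2^{r-1}r!}\sum (X-\Erw X) f(\sigma).
\]
The mean term equals $-c^s n\cdot n^{-r}\sum\kappa_1 f(\sigma)=-c^s n\,\alpha(\sigma)$, with $\kappa_1$ as defined. Summing over $v_1\neq\cdots\neq v_r$ instead of all tuples costs only $O(n^{r-1})$ terms, i.e.\ $o(n)$ after normalization. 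Thus
\[
-\tfrac1n H(\sigma)= \alpha(\sigma)c^s - \tfrac1n W(\sigma) - \kappa + o(1),
\]
where $W$ is the centered sum.

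\textbf{Step 2: replace the centered sum by a Gaussian.} Replace the centered variables $X-\Erw X$ by Gaussians with the same variance. The variance of $\frac{1}{\aut(F)2^{r-1}r!}(X-\Erw X)$ per unordered set, times $n^2$, must reproduce $\kappa_2 c^s n^{1-r}\cdot r!\,g^2$. This is where the normalization $\sqrt{\kappa_2}\,n^{-(r+1)/2}\sqrt{r!}$ and identity \eqref{eq-g-sum-property} enter: after multiplying by $1/n$, the fluctuation becomes $c^{s/2}n^{-(r+1)/2}\sqrt{\kappa_2}\,U_n(\sigma)$ in distribution. Because $-W$ and $W$ have the same Gaussian law, the sign is immaterial. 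Checking that the variance constant matches $\kappa_2$ exactly, including the $\aut(F)$ factors, is the bookkeeping part of this step.

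\textbf{Step 3: the universality step (main obstacle).} Justify that $\frac1n\max_\sigma[\alpha(\sigma)c^s - W(\sigma)/n]$ has the same limit as with Gaussian disorder, up to $o_c(c^{s/2})$. I would invoke the Lindeberg interpolation argument of \cite{sen2018optimization} directly. Smooth the maximum via the free energy $\frac{1}{\beta n}\log\sum_\sigma e^{\beta(\cdot)}$, which differs from the max by $\log 2/\beta$. Then swap the sparse centered Bernoulli entries for Gaussians one at a time; third-order Taylor terms are controlled because the third moments are $O(c^s n^{1-r})$, giving total error $O(\beta^2 c^{s}\cdot c^{-3s/2})$ relative to the scale $c^{s/2}$. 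Choosing $\beta$ slowly growing in $c$ gives error $o_c(c^{s/2})$.

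The constrained formulation \eqref{def-T-alp} is handled by writing $\max_\sigma=\max_\alpha\max_{\sigma:\alpha(\sigma)=\alpha}$, over the $O(n^r)$ possible values of $\alpha$. The mean term depends on $\sigma$ only through $\alpha$, so the quantity becomes $\max_\alpha(\alpha c^s+T_n^\alpha c^{s/2})$.

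\textbf{Step 4: pass from expectation to whp.} Gaussian concentration for a Lipschitz maximum shows that $T_n^\alpha$ concentrates around its mean uniformly in $\alpha$, with $O(n^{-1/2})$ fluctuations after normalization. Bounded-differences concentration for the discrete $\min_\sigma H$, where flipping one hyperedge changes it by at most $1$, does the same on the original side. Together these let me replace the random maximum by $V_n$ with an $o(1)$ error whp, which completes the identity.
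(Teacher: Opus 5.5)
\textbf{The real gap is in Step 2.} Your overall route is the paper's: split $H$ via Lemma~\ref{lem:spin-form-H} into $|E(\Hg_F)|/2^{r-1}$, the mean part $-c^s n\,\alpha(\sigma)$ and a centered part, then pass to Gaussian disorder by the Lindeberg argument of \cite{sen2018optimization}. The paper simply cites \cite[Theorem 1.1]{sen2018optimization} for your Step 3, and your error bookkeeping there is fine.

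The problem is the claim that the fluctuation becomes $c^{s/2}n^{-(r+1)/2}\sqrt{\kappa_2}\,U_n(\sigma)$ in distribution. The hypergraph has no hyperedges on repeated vertices. So replacing each $A_e-\Erw A_e$ by $\sqrt{\kappa_2 c^s n^{1-r}}\,J_e$ produces $c^{s/2}\sqrt{\kappa_2}\,n^{-(r+1)/2}\hat U_n(\sigma)$, where $\hat U_n(\sigma)=\sum_{v_1\neq\cdots\neq v_r}J_{v_1,\ldots,v_r}f(\sigma_{v_1},\ldots,\sigma_{v_r})$. By contrast, $U_n$, and hence $V_n$, also carries independent Gaussian terms for every $v_1\le\cdots\le v_r$ with a repeated index. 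Identity \eqref{eq-g-sum-property} concerns sums over all of $[n]^r$ and does not make these agree. Since $f^2\ge 1$ on $\{-1,1\}^r$, the variances of $U_n(\sigma)$ and $\hat U_n(\sigma)$ differ by order $n^{r-1}$.

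What is missing is exactly the estimate the paper proves in this proposition. For each $\sigma$, $U_n(\sigma)-\hat U_n(\sigma)$ is a centered Gaussian of variance $O(n^{r-1})$. So \eqref{eq:Gauss-tail} and a union bound over the $2^n$ configurations give $\Erw\max_\sigma n^{-(r+1)/2}|U_n(\sigma)-\hat U_n(\sigma)|=O(n^{-1/2})$. Alternatively, you could put independent Gaussians on the diagonal tuples inside the interpolation; the resulting second-order variance mismatch vanishes as $n\to\infty$. This bound is uniform in $\sigma$, and you already replaced the distinct-tuple mean by $c^s\alpha(\sigma)$ with an $O(c^s/n)$ error. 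You can therefore compare the two maxima over $\sigma$ directly. That is cleaner than the paper's comparison of $T_n^\alpha$ with $\hat T_n^\alpha$ at a fixed $\alpha$, since their constraint sets are not the same.

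\textbf{A smaller point in Step 4.} This step supplies a passage from expectation to whp that the paper leaves implicit, but the concentration tool needs changing. McDiarmid's inequality applied to the $\Theta(n^r)$ independent indicators of potential copies gives a bound $\exp(-\Theta(t^2n^{2-r}))$ for deviations of size $tn$, which gives nothing useful for $r\ge 2$. Use Efron--Stein instead. Resampling one potential copy changes $\min_\sigma H$ by at most $1$, and only with probability at most $2q$. Hence $\Var(\min_\sigma H)\le q\binom{n}{r}\frac{r!}{\aut(F)}=O(n)$, and Chebyshev gives fluctuations of order $o(n)$ whp. The Gaussian-side concentration you mention is not needed, because $V_n$ is already an expectation and the Lindeberg comparison is between expectations.
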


\begin{proof}[Proof of \Cref{pro:H-V}]

Let 
\begin{align}
    A_{v_1,\ldots,v_r}=-\frac{1}{\aut(F)2^{r-1}r!}\sum_{\rho\in \mathcal{S}[r]}\indic{F(v_{\rho(1)},\ldots,v_{\rho(r)})\in E(\Hg_F)}.
\end{align}
Then
\begin{align}
    \Erw\brk{A_{v_1,\ldots,v_r}}=-\frac{1}{\aut(F)2^{r-1}}c^sn^{1-r}=\kappa_1c^s n^{1-r}.
\end{align}
By definition, for any $\rho_0\in \mathcal{S}[r]$ there are precisely $\aut(F)$ elements $\rho \in \mathcal{S}[r]$ such that $$F(v_{\rho(1)},\ldots,v_{\rho(r)})=F(v_{\rho_0(1)},v_{\rho_0(2)},\ldots,v_{\rho_0(r)}).$$ Hence,
\begin{align*}
    \Var\bc{A_{v_1,\ldots,v_r}}&=\frac{1}{(2^{r-1}r!)^2}\frac{r!}{\aut(F)}p^s(1-p^s)=\frac{1}{2^{2r-2}r!\aut(F)}(1-p^s)c^sn^{1-r}\\
    &= \frac{1}{2^{2r-2}r!\aut(F)}c^sn^{1-r}(1+o_n(1))=\kappa_2c^s n^{1-r}(1+o_n(1)).
\end{align*}
Let
\begin{align*}
    L_n&=\frac{1}{n}\max_{\sigma \in \cbc{-1,1}^n}-H(\sigma)+\frac{\abs{E(\Hg_F)}}{2^{r-1}n}\\
    &=\frac{1}{n}\max_{\sigma\in\cbc{-1,1}^n}\sum_{v_1\neq \ldots\neq v_r}A_{v_1,\ldots,v_r}f(\sigma_{v_1},\ldots,\sigma_{v_r}).
\end{align*}
Then, \whp~,
\begin{align*}
    \frac{1}{n}\min_{\sigma \in \cbc{-1,1}^n}H(\sigma)=-L_n+\frac{c^s}{2^{r-1}\aut(F)}+o_n(1).
\end{align*}
Introduce
\begin{align}
    \hat{U}_n(\sigma)=\sum_{v_1\neq \ldots\neq v_r}J_{v_1,\ldots,v_r}f(\sigma_{v_1},\ldots,\sigma_{v_r}),
\end{align}
and
\begin{align}\label{def-hT-alp}
    \hat{T}_n^\alpha=\max_{\sigma \in \cbc{-1,1}^n}n^{-(r+1)/2}\sqrt{\kappa_2}\hat{U}_n(\sigma),\\
    \mbox{subject to}~n^{-r}\sum_{v_1\neq \ldots\neq v_r}\kappa_1 f(\sigma_{v_1},\ldots,\sigma_{v_r})=\alpha.\nn
\end{align}
By \cite[Theorem 1.1]{sen2018optimization},
    \begin{align*}
        L_n=\Erw\brk{\max_{\alpha}(\alpha c^s+\hat{T}_n^\alpha c^{s/2})}+o_{c}(c^{s/2}).
    \end{align*}
Hence, it remains to show that
\begin{align*}
    \Erw\brk{\max_{\alpha} \left( \alpha c^s + \hat{T}_n^\alpha c^{s/2} \right)} 
    = \Erw\brk{\max_{\alpha} \left( \alpha c^s + T_n^\alpha c^{s/2} \right)} + o_n(1).
\end{align*}

To this end, it suffices to verify that
\begin{align*}
    \Erw\brk{\max_{\alpha} \abs{ T_n^\alpha c^{s/2} - \hat{T}_n^\alpha c^{s/2} }} = o_n(1),
\end{align*}
or equivalently
\begin{align*}
    \Erw\brk{ \max_{\sigma \in \{-1,1\}^n} n^{-(r-1)/2} \abs{ U_n(\sigma) - \hat{U}_n(\sigma) } } = o(n).
\end{align*}

We have
\begin{align*}
    U_n(\sigma) - \hat{U}_n(\sigma) 
    = \sqrt{r!} \sum_{\substack{1 \leq v_1 \leq \cdots \leq v_r \leq n \\ \exists\, 1 \leq i < j \leq r: v_i = v_j}} 
    g(v_1, \ldots, v_r) \, J_{v_1, \ldots, v_r} \, f(\sigma_{v_1}, \ldots, \sigma_{v_r}).
\end{align*}

Hence, \( U_n(\sigma) - \hat{U}_n(\sigma) \) is a centered Gaussian random variable with variance \( O(n^{r-1}) \).  
\ignore{\yd{maybe we can move this later since it's used at the end}\dg{good suggestion. 
Moved it here where it logically belongs}}
We recall  the following well-known fact.
For any \( x > 0 \),
\begin{align}\label{eq:Gauss-tail}
\frac{\e^{-x^2/2}}{\sqrt{2\pi}} \left( \frac{1}{x} - \frac{1}{x^3} \right) 
\leq \mathbb{P}\left[\mathcal{N}(0,1) \geq x\right] 
\leq \frac{\e^{-x^2/2}}{x \sqrt{2\pi}}.
\end{align}
By (\ref{eq:Gauss-tail}), we obtain a tail bound:
\begin{align*}
    \mathbb{P}\left[ 
        \max_{\sigma \in \{-1,1\}^n} n^{-(r-1)/2} \abs{U_n(\sigma) - \hat{U}_n(\sigma)} \geq x 
    \right] 
    &\leq 2^n \cdot \mathbb{P}\left[ n^{-(r-1)/2} \abs{U_n(\sigma) - \hat{U}_n(\sigma)} \geq x \right] \\
    &= \exp\left( -O(x^2) + n \log 2 \right).
\end{align*}

Summing over all \( x > n^{2/3} \), we obtain
\begin{align*}
    \sum_{x \geq n^{2/3},~x\in\ZZ} 
    \mathbb{P}\left[ 
        \max_{\sigma \in \{-1,1\}^n} n^{-(r-1)/2} \abs{U_n(\sigma) - \hat{U}_n(\sigma)} \geq x 
    \right] 
    = o_n(1).
\end{align*}

Therefore,
\begin{align*}
    &\Erw\brk{ 
        \max_{\sigma \in \{-1,1\}^n} 
        n^{-(r-1)/2} \abs{U_n(\sigma) - \hat{U}_n(\sigma)} 
    } \\
    &\leq \ceil{n^{2/3}} 
    + \sum_{x \geq n^{2/3},x\in\ZZ} 
    \mathbb{P}\left[ 
        \max_{\sigma \in \{-1,1\}^n} 
        n^{-(r-1)/2} \abs{U_n(\sigma) - \hat{U}_n(\sigma)} \geq x 
    \right] 
    = o(n),
\end{align*}
as claimed.
\end{proof}
\section{Extremizing Gaussians}\label{sec-maximizer}
In light of \Cref{pro:H-V}, our next goal is obtaining asymptotics of \( V_n \). To address this, we first study the value of \( \alpha \) that maximizes \( \alpha c^s + T_n^\alpha c^{s/2} \). For simplicity of notation, let \( d = c^s \).

Let \( \bar{\sigma} = \frac{1}{n} \sum_{i \in [n]} \sigma_i\in[-1,1] \) denote the empirical average of the spin configuration \( \sigma \in \{-1,1\}^n \). Next we express $\alpha$ in terms of $\bar \sigma$.
We have
\begin{align}\label{eq-sum-f-p-square}
\sum_{v_1, \ldots, v_r \in [n]} f(\sigma_{v_1}, \ldots, \sigma_{v_r}) 
&= \sum_{v_1, \ldots, v_r \in [n]} \sum_{m=1}^{\lfloor r/2 \rfloor} \sum_{\substack{R \subset [r] \\ |R| = 2m}} \prod_{j \in R} \sigma_{v_j} \\
&= \sum_{m=1}^{\lfloor r/2 \rfloor} n^r \binom{r}{2m} \left( \frac{1}{n} \sum_{i \in [n]} \sigma_i \right)^{2m} \notag \\
&= n^r \cdot \frac{(1 + \bar{\sigma})^r + (1 - \bar{\sigma})^r - 2}{2}. \notag
\end{align}

Note that $\kappa_1 = -\frac{1}{\aut(F)\cdot 2^{r-1}}$. Combining this with \eqref{eq-sum-f-p-square}, we obtain
\begin{align}\label{eq-ub-alp}
\alpha = \kappa_1 \cdot \frac{(1 + \bar{\sigma})^r + (1 - \bar{\sigma})^r - 2}{2} \leq 0.
\end{align}

\begin{lemma}\label{cla-alp}
    Whp as $n\to\infty$, the maximizer  $\alpha_{\max}$ that maximizes $\alpha d+T_n^\alpha \sqrt{d}$  satisfies 
\begin{align}
    \alpha_{\max}=O(1/\sqrt{d}).
\end{align}
\end{lemma}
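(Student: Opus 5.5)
The plan is to compare the objective at $\alpha=0$, or at the feasible $\alpha$ closest to $0$, with its value at any $\alpha$ for which $|\alpha|\sqrt d$ is large. We will use that $\alpha\le 0$ by \eqref{eq-ub-alp}, and that $T_n^\alpha$ is bounded above and below, with high probability, by constants independent of $\alpha$ and $d$.

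\textbf{Upper bound on $T_n^\alpha$.} For each fixed $\sigma$, $U_n(\sigma)$ is a centered Gaussian. By \eqref{eq-g-sum-property}, its variance is
\[
r!\sum_{v_1,\ldots,v_r\in[n]} f(\sigma_{v_1},\ldots,\sigma_{v_r})^2 \le r!\, n^r\, 2^{2r}.
\]
Hence $n^{-(r+1)/2}\sqrt{\kappa_2}\,U_n(\sigma)$ has variance $O(1/n)$. A union bound over the $2^n$ configurations, using the Gaussian tail bound \eqref{eq:Gauss-tail}, gives whp
\[
\sup_\alpha T_n^\alpha\le \max_\sigma n^{-(r+1)/2}\sqrt{\kappa_2}\,U_n(\sigma)\le C_1
\]
for a constant $C_1=C_1(r,F)$.

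\textbf{Lower bound at $\alpha\approx 0$.} Restrict to balanced configurations with $\bar\sigma=0$ (or $|\bar\sigma|\le 1/n$ when $n$ is odd). Then by \eqref{eq-ub-alp} the constraint value is $\alpha_0=O(1/n^2)$. We need $T_n^{\alpha_0}\ge -C_2$ whp. A trivial way to see this is to pick any fixed balanced $\sigma$. Then $T_n^{\alpha_0}\ge n^{-(r+1)/2}\sqrt{\kappa_2}\,U_n(\sigma)$, which is a Gaussian with variance $O(1/n)$, so it is $\ge -1$ whp. (In fact the maximum is of order a positive constant, but this is not needed.) Therefore whp
\[
\max_\alpha(\alpha d+T_n^\alpha\sqrt d)\ \ge\ \alpha_0 d-\sqrt d\ \ge\ -2\sqrt d
\]
for $n$ large.

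\textbf{Conclusion.} For any feasible $\alpha$ with $\alpha< -K/\sqrt d$, we have
\[
\alpha d+T_n^\alpha\sqrt d< -K\sqrt d+C_1\sqrt d.
\]
This is strictly below $-2\sqrt d$ once $K>C_1+2$. So whp the maximizer satisfies $-K/\sqrt d\le\alpha_{\max}\le 0$, i.e.\ $\alpha_{\max}=O(1/\sqrt d)$.

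The main point to check carefully is the union bound. It must be uniform over all constraint levels $\alpha$ simultaneously, which it is, because we bound the unconstrained maximum over $\sigma$. The constant $C_1$ must not depend on $d$, which holds because $U_n$ does not involve $d$. Finally, if $n$ is odd, the near-zero value $\alpha_0=O(n^{-2})$ costs only $O(d/n^2)=o(1)$, which is harmless.
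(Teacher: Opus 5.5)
Your proof is correct and follows essentially the same route as the paper: a union bound over the $2^n$ configurations gives $\sup_\alpha T_n^\alpha=O(1)$ whp, uniformly in $\alpha$ and independently of $d$, and since $\alpha\le 0$, any feasible $\alpha$ with $|\alpha|\sqrt d$ large loses to the value at (or near) $\alpha=0$. Your version is slightly cleaner than the paper's in two ways. The paper only records $T_n^0\sqrt d=O(\sqrt d)$ at $\alpha=0$, whereas your fixed-balanced-$\sigma$ argument gives $T_n^0\ge -1$ whp, which is the lower bound the comparison actually needs. You also pick $K>C_1+2$ directly instead of arguing by contradiction along a sequence $t_n\to\infty$.
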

\begin{proof}
Consider the covariance of the Gaussian random variables \( \cbc{U_n(\sigma)}_{\sigma\in\cbc{-1,1}^n} \). Note that $(\sum_{m=1}^{\lfloor r/2 \rfloor} \sum_{\substack{R \subset [r], |R| = 2m}} 1)$ counts the number of nonempty even subsets of a set of size $r$, which is bounded from above by $2^r$. Hence, for any \( x, y \in \{-1,1\}^n \), we obtain
\begin{align}\label{eq-cor}
\mathbb{E}[U_n(x) U_n(y)] 
&= r! \sum_{v_1, \ldots, v_r \in [n]} \left( \sum_{m=1}^{\lfloor r/2 \rfloor} \sum_{\substack{R \subset [r] \\ |R| = 2m}} \prod_{j \in R} x_{v_j} \right) \left( \sum_{m=1}^{\lfloor r/2 \rfloor} \sum_{\substack{R \subset [r] \\ |R| = 2m}} \prod_{j \in R} y_{v_j} \right) \\
&\leq r! \cdot 2^{2r} \cdot n^r = C n^r. \notag
\end{align}

Thus, each \( T_n^\alpha \) is the maximum over \( 2^n \) correlated Gaussian variables with variance and covariance bounded above by \( C \kappa_2 n^{-1} \). By a standard argument e.g., combining (\ref{eq:Gauss-tail}) with a union bound, as in the proof of \Cref{pro:H-V}, we obtain that, \whp~,
\[
T_n^\alpha =O(\sqrt{n^{-1}}\sqrt{\log 2^n})= O(1).
\]

Suppose, toward a contradiction, that for sufficiently large \( n \) and \( d = c^s \), there exists a sequence \( t_n \to \infty \) such that for infinitely many $n$, the $\alpha$ achieve the maximum satisfies \( |\alpha| > t_n / \sqrt{d} \). Then from \eqref{eq-ub-alp}, we have \( \alpha < -t_n / \sqrt{d} \), and hence, with high probability, for these infinitely many $n$, as \( n \to \infty \),
\begin{align*}
\max_{\alpha} \left( \alpha d + T_n^\alpha \sqrt{d} \right) 
\leq -t_n \sqrt{d} + O(\sqrt{d}).
\end{align*}

On the other hand, evaluating the expression at \( \alpha = 0 \), we have, with high probability,
\[
\max_{\alpha} \left( \alpha d + T_n^\alpha \sqrt{d} \right)   \geq T_n^0 \sqrt{d}  = O(\sqrt{d}),
\]
which contradicts the previous upper bound. Therefore, no such sequence \( t_n \to \infty \) can exist. The claim in \Cref{cla-alp} follows by the arbitrariness of \( t_n \).
\end{proof}
Hence, it remains to analyze \( T_n^\alpha \) in the regime where \( \alpha \) is close to zero, i.e., $\bar{\sigma}$ is close to zero. However, to the best of our knowledge, there is no known method for obtaining an explicit formula for it.  Instead, we consider the asymptotic behavior as we first let \( d \to \infty \), and then \( r \to \infty \).

\Cref{cla-alp} implies that the value of \( \alpha \) maximizing \( \alpha d + T_n^\alpha \sqrt{d} \) is close to zero when $d$ is large.  
Motivated by this, in this section, we aim to establish upper and lower bounds on  
\[
\max_{\sigma \in \cbc{-1,1}^n} U_n(\sigma),
\]
under the constraint  \( \sum_{i \in [n]} \sigma_i \in [-n h_0, n h_0] \) for some small \( h_0 > 0 \).

In light of (\ref{thm-convergence})
we may assume without the loss of generality
that $n$ is divisible by $4$.
Fix $t>0$ and  \( x, y \in \cbc{-1,1}^n \) such that  
\[
\left| \frac{1}{n} \sum_{i \in [n]} x_i \right|, \quad \left| \frac{1}{n} \sum_{i \in [n]} y_i \right| \leq t.
\]
Fix  \( R_1, R_2 \subset [r] \) satisfying  \( 1 \in R_1 \setminus R_2 \). Then,
\[
\sum_{v_1, \ldots, v_r \in [n]} \prod_{j \in R_1} x_{v_j} \prod_{j \in R_2} y_{v_j} 
= \sum_{v_1 \in [n]} x_{v_1} \sum_{v_2, \ldots, v_r \in [n]} \prod_{j \in R_1 \setminus \{1\}} x_{v_j} \prod_{j \in R_2} y_{v_j} 
= O(t n^r).
\]
Consequently, \eqref{eq-cor} gives
\begin{align}
    \Erw\brk{U_n(x) U_n(y)} 
    &= r! \sum_{v_1, \ldots, v_r \in [n]} 
    \left(
        \sum_{m=0}^{\rou{r/2}} \sum_{\substack{R \subset [r]\\ \abs{R} = 2m}} \prod_{j \in R} x_{v_j}
    \right)\left( 
        \sum_{m=1}^{\rou{r/2}} \sum_{\substack{R \subset [r]\\ \abs{R} = 2m}} \prod_{j \in R} y_{v_j}
    \right) \label{eq-mul-u0} \\
    &= r! \sum_{v_1, \ldots, v_r \in [n]} 
        \left( \sum_{m=1}^{\rou{r/2}} \sum_{\substack{R \subset [r]\\ \abs{R} = 2m}} \prod_{j \in R} x_{v_j} y_{v_j} \right)
        + O(t n^r)\nonumber\\
    &= r! n^r \sum_{m=1}^{\rou{r/2}} \binom{r}{2m} 
        \left( \frac{1}{n} \sum_{i \in [n]} x_i y_i \right)^{2m} 
        + O(t n^r). \nonumber
\end{align}

Let \( o(x,y) = \frac{1}{n} \sum_{i \in [n]} x_i y_i \), then 
\begin{align}\label{eq-mul-u-l}
    \Erw\brk{U_n(x) U_n(y)} 
    &= r! n^r \sum_{m=1}^{\rou{r/2}} \binom{r}{2m} o(x,y)^{2m} + O(t n^r) \\
    &= r! n^r \cdot \frac{(1+o(x,y))^r + (1 - o(x,y))^r - 2}{2} + O(t n^r). \nonumber
\end{align}

Specifically, if \( \sum_{i \in [n]} x_i = \sum_{i \in [n]} y_i = 0 \), then $t=0$, and
\begin{align}\label{eq-cov-sum0}
\Erw\brk{U_n(x) U_n(y)} = r! n^r \cdot \frac{(1+o(x,y))^r + (1 - o(x,y))^r - 2}{2}.    
\end{align}

If in addition \( x = y \), then \( o(x,y) = 1 \), and
\[
\Erw\brk{U_n(x)^2} = \Erw\brk{U_n(x) U_n(y)} = r! n^r (2^{r-1} - 1).
\]


To study $\max_{\sigma \in \cbc{-1,1}^n} U_n(\sigma)$, we use the following fact about the joint tail probability of a bivariate standard normal distribution, 
known also as Slepian bound~\cite{slepian1962one}:

\begin{lemma}\label{lem-cor-normal-tail}
Let \( (Z, Z_{\rho}) \) be a bivariate normal vector with \( Z, Z_{\rho} \sim \mathcal{N}(0,1) \) and correlation \( \mathbb{E}[Z Z_{\rho}] = \rho \in (-1, 1) \). Then, for any \( u > 0 \),
\[
\mathbb{P}(Z > u, Z_\rho > u) 
\leq \frac{(1 + \rho)^2}{2 \pi u^2 \sqrt{1 - \rho^2}}  
\exp \left( -\frac{u^2}{1 + \rho} \right).
\]
\end{lemma}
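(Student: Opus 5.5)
We will prove the bound by integrating the bivariate normal density directly over the quadrant $(u,\infty)^2$, after shifting the integration variables to the corner $(u,u)$. For $|\rho|<1$ the joint density of $(Z,Z_\rho)$ is
\[
\phi_\rho(x,y)=\frac{1}{2\pi\sqrt{1-\rho^2}}\exp\left(-\frac{x^2-2\rho xy+y^2}{2(1-\rho^2)}\right),
\]
so that $\mathbb{P}(Z>u,Z_\rho>u)=\int_u^\infty\int_u^\infty \phi_\rho(x,y)\,\rd x\,\rd y$.

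The key step is the substitution $x=u+a$, $y=u+b$ with $a,b>0$, followed by expanding the quadratic form in the exponent. A direct computation gives
\[
x^2-2\rho xy+y^2 = 2u^2(1-\rho) + 2u(1-\rho)(a+b) + \left(a^2-2\rho ab+b^2\right).
\]
Dividing by $2(1-\rho^2)=2(1-\rho)(1+\rho)$, the exponent equals
\[
-\frac{u^2}{1+\rho}-\frac{u(a+b)}{1+\rho}-\frac{a^2-2\rho ab+b^2}{2(1-\rho^2)}.
\]
The last term is nonpositive for all $a,b$, because the quadratic form $a^2-2\rho ab+b^2$ is positive definite when $|\rho|<1$. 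Discarding it therefore only increases the integrand.

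With that term dropped, the integrand factorizes:
\[
\mathbb{P}(Z>u,Z_\rho>u)\le \frac{e^{-u^2/(1+\rho)}}{2\pi\sqrt{1-\rho^2}}\left(\int_0^\infty e^{-ua/(1+\rho)}\,\rd a\right)^2 .
\]
Since $u>0$ and $1+\rho>0$, the one-dimensional integral equals $(1+\rho)/u$. Squaring it gives exactly the factor $(1+\rho)^2/u^2$ in the claimed bound, which completes the argument.

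The only point needing care is the algebraic expansion of the exponent, which must produce exactly the $u^2/(1+\rho)$ term and the linear term $u(a+b)/(1+\rho)$. Everything else is elementary, so I do not expect a genuine obstacle.
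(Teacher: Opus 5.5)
Your proof is correct and complete. The expansion $x^2-2\rho xy+y^2=2u^2(1-\rho)+2u(1-\rho)(a+b)+(a^2-2\rho ab+b^2)$ is right, and the discarded remainder is nonnegative for $|\rho|<1$. After dropping it, the positive integrand factors (by Tonelli) into $\bigl((1+\rho)/u\bigr)^2$ times the prefactor, which is exactly the stated constant.

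The paper gives no proof of this lemma; it quotes the inequality as a ``Slepian bound'' with a citation. Your argument therefore supplies a self-contained justification where the paper defers to the literature.

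A derivation closer in spirit to that citation integrates in the correlation parameter rather than in space. The identity underlying Slepian's comparison lemma gives
\[
\partial_\lambda \PP(Z>u,Z_\lambda>u)=\phi_\lambda(u,u)=\frac{e^{-u^2/(1+\lambda)}}{2\pi\sqrt{1-\lambda^2}},
\]
so $\PP(Z>u,Z_\rho>u)=\int_{-1}^{\rho}\phi_\lambda(u,u)\,\rd\lambda$. Write the integrand as $\frac{(1+\lambda)^2}{2\pi u^2\sqrt{1-\lambda^2}}\cdot\frac{\rd}{\rd\lambda}e^{-u^2/(1+\lambda)}$. Bounding the increasing prefactor $(1+\lambda)^{3/2}(1-\lambda)^{-1/2}$ by its value at $\lambda=\rho$ then yields precisely the same inequality.

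The two routes buy different things:
\begin{itemize}
\item Your spatial route needs only the explicit density and positive-definiteness of $a^2-2\rho ab+b^2$. It also shows that the bound is asymptotically sharp as $u\to\infty$ for fixed $\rho$: on the effective range $a,b=O(1/u)$, the discarded term is $O(u^{-2})$.
\item The correlation route instead gives an exact one-dimensional representation of the orthant probability.
\end{itemize}

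In both, the bound is lossy only when $\rho\to1$ or $u$ is small. Neither matters in the paper's application, where $u$ is of order $\sqrt{n}$ and $0\le\rho\le\rho_{\max}$ stays bounded away from $1$.
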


For normalization, define
\[
W_n(\sigma) = \frac{U_n(\sigma)}{\sqrt{r!} \cdot n^{(r+1)/2} \cdot \sqrt{2^{r-1} - 1}}.
\]
Then, by \eqref{eq-cov-sum0} with \( o(x,y) = 1 \), we have \( W_n(\sigma) \sim \mathcal{N}(0, n^{-1}) \) when $\sum_{i\in[n]}\sigma_i=0$.

Let \( S_j = \cbc{\sigma \in \cbc{-1,1}^n : \sum_{i \in [n]} \sigma_i = j} \). From \eqref{eq-mul-u0}, we observe that the variance of \( W_n(\sigma) \) is the same for all \( \sigma \in S_{nh} \). 
From \eqref{eq-mul-u-l} with \( o(x,y) = 1 \), we further  observe that \( W_n(\sigma) \) for \( \sigma \in S_{nh} \) follows a normal distribution \( \mathcal{N}(0, (1 + f(n,h))/n) \), where
\[
\lim_{h \to 0} \limsup_{n \to \infty} \abs{f(n,h)} = 0.
\]

With the notion above, we next  derive bounds for 
\( \max_{\sigma \in S_0} W_n(\sigma) \) and 
\( \max_{\sigma \in \cup_{\abs{h} \leq h_0} S_{nh}} W_n(\sigma) \):
\begin{lemma}\label{lem-bound-max-normal}
    For any \( \varepsilon > 0 \), there exists \( h(\varepsilon) \in \mathbb{N} \) such that for \( h_0 < h(\varepsilon) \), and \( 4 \mid n \), we have
    \[
    \mathbb{P} \left(
        (1 - \varepsilon) \sqrt{2 \log 2} \leq \max_{\sigma \in S_0} W_n(\sigma) 
        \leq \max_{\sigma \in \cup_{\abs{h} \leq h_0} S_{nh}} W_n(\sigma) 
        \leq (1 + \varepsilon) \sqrt{2 \log 2}
    \right) 
    \geq 1 - o_n(1) - o_r(1).
    \]
\end{lemma}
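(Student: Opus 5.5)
The plan is a first-moment bound for the upper inequality and a second-moment (Paley–Zygmund) argument on $S_0$ for the lower one. The middle inequality is trivial, since $S_0\subset\cup_{|h|\le h_0}S_{nh}$. The large-$r$ limit is used to make the covariance structure close to that of the random energy model: by \eqref{eq-cov-sum0}, for $x,y\in S_0$ with overlap $o=o(x,y)$,
\[
\Erw\brk{W_n(x)W_n(y)}=\frac{1}{n}\,\frac{(1+o)^r+(1-o)^r-2}{2^r-2}=:\frac{\rho_r(o)}{n},
\]
and $\rho_r(o)\to 0$ as $r\to\infty$ for every fixed $|o|<1$.

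\emph{Upper bound.} For $\sigma\in S_{nh}$ with $|h|\le h_0$ we have $W_n(\sigma)\sim\mathcal N(0,(1+f(n,h))/n)$. Choose $h(\varepsilon)$ small enough that $\sup_{|h|\le h_0}|f(n,h)|\le\eta(\varepsilon)$ for large $n$, where $(1+\varepsilon)^2/(1+\eta)>1$. The union has at most $2^n$ elements. By \eqref{eq:Gauss-tail} and a union bound,
\[
\PP\bc{\max W_n>(1+\varepsilon)\sqrt{2\log 2}}\le 2^n\exp\bc{-n(1+\varepsilon)^2\log 2/(1+\eta)}=o_n(1).
\]
This part is routine.

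\emph{Lower bound.} Set $u=(1-\varepsilon)\sqrt{2\log 2}$ and let $N=\sum_{\sigma\in S_0}\indic{W_n(\sigma)\ge u}$. We have $|S_0|=\binom{n}{n/2}=2^{n}e^{-O(\log n)}$, so by \eqref{eq:Gauss-tail}
\[
\Erw N=\binom{n}{n/2}\PP(\mathcal N(0,1)\ge u\sqrt n)=\exp\bc{n\log 2\,(1-(1-\varepsilon)^2)+O(\log n)}\to\infty.
\]
We then show $\Erw N^2\le(1+o_n(1)+o_r(1))(\Erw N)^2$, and Paley–Zygmund gives $\PP(N>0)\ge 1-o_n(1)-o_r(1)$.

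To bound $\Erw N^2$, group pairs $(x,y)$ by overlap $o=k/n$. The number of pairs with overlap $o$ is at most $|S_0|\binom{n}{n(1-o)/2}\approx|S_0|\,2^{n}e^{-nI(o)}$, where $I(o)=\log 2-H((1+o)/2)$ and $H$ is the binary entropy in nats. For each such pair, Lemma~\ref{lem-cor-normal-tail} with $\rho=\rho_r(o)$ gives joint probability at most $\exp(-nu^2/(1+\rho)+O(\log n))$. Compared with $(\Erw N)^2\approx 2^{2n}e^{-nu^2}$, the ratio for overlap $o$ is
\[
\exp\bc{n\Big[-I(o)+u^2\tfrac{\rho}{1+\rho}\Big]+O(\log n)}.
\]
We split the overlaps into three ranges.

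\begin{enumerate}
\item \emph{Near-zero overlaps}, $|o|\le\delta$. Here $\rho_r(o)\le C r^2 o^2/2^r$ for large $r$, so the exponent is at most $-no^2/2+nu^2Cr^2o^2 2^{-r}<0$ once $r$ is large. These terms contribute $(1+o(1))(\Erw N)^2$. Near $o=0$ I would use the exact bivariate Gaussian computation, where the correlation is $O(r^2 2^{-r})\cdot o^2$, so the sum is a convergent Gaussian-type sum.
\item \emph{Intermediate overlaps}, $\delta\le|o|\le 1-\delta'$. Here $\rho_r(o)\le ((1+|o|)/2)^r\cdot 2\to 0$ uniformly as $r\to\infty$, while $I(o)\ge c(\delta)>0$. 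The exponent is negative, so this range is exponentially negligible.
\item \emph{Overlaps near $\pm1$}. Here $\rho\to 1$ and the bound reduces to the REM estimate. The ratio is at most $\exp(n[-I(o)+u^2/2])$ with $u^2/2=(1-\varepsilon)^2\log 2<\log 2$, and $I(o)\to\log 2$ as $|o|\to 1$. Choosing $\delta'(\varepsilon)$ small keeps the exponent negative.
\end{enumerate}

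\emph{Main obstacle.} The difficulty is making the overlap ranges uniform. For fixed $r$, the function $\rho_r(o)$ is not small near $|o|=1$, so a large-$r$ argument alone does not handle that range. The matching at $|o|=1-\delta'$ needs $\rho_r\ge 1-\theta$ there to be offset by entropy, meaning $u^2\rho/(1+\rho)<I(o)$. This should hold for all $|o|\in[\delta,1]$ once $r\ge r_0(\varepsilon)$, since $u^2/2<\log 2$ leaves a margin of order $\varepsilon$. The term $o=\pm1$ itself (the diagonal $x=\pm y$; note $W_n(-x)=W_n(x)$ because $f$ is even) contributes $2\Erw N=o((\Erw N)^2)$.
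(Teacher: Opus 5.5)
Your proposal is essentially the paper's proof. The upper bound is a union bound over at most $2^n$ configurations, and the lower bound is a second-moment argument on $S_0$ with the overlap split into three ranges: a Gaussian window around $0$ (giving $1+o_n(1)+o_r(1)$), a middle range, and a range near $\pm 1$ controlled by the margin $u^2/2<\log 2$. Near $\pm 1$ the paper gets your exponent $-I(o)+u^2/2$ by simply dropping one of the two events, which also covers the diagonal. Your explicit use of $\rho_r\to 0$ uniformly on $|o|\le 1-\delta'$ spells out the large-$r$ dependence that the paper's quadratic bound \eqref{eq-bound-power} leaves implicit.

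Two details need fixing. First, $\rho_r(o)\le Cr^2o^2 2^{-r}$ is false on a fixed window $|o|\le\delta$. Use convexity of $s\mapsto\sum_{m\ge1}\binom{r}{2m}s^m$ to get $\rho_r(o)\le o^2\rho_r(\delta)/\delta^2$, with $\rho_r(\delta)\to 0$. Second, in the window near $o=0$ you must use the exact count $\binom{n}{n/2}\binom{n/2}{n/4-y}^2$ rather than $|S_0|\binom{n}{n(1-o)/2}$. The latter overcounts by a factor of order $\sqrt n$ and would destroy the $1+o(1)$.
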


\begin{proof}
In this proof, we apply the first moment-method to establish the upper bound and the second-moment method for the lower bound.

\textbf{Upper bound:}  
By (\ref{eq:Gauss-tail}),  we have:
\begin{align*}
    \PP\bc{\max_{\sigma \in \bigcup_{\abs{h} \leq h_0} S_{nh}} W_n(\sigma) \geq (1+\vep)\sqrt{2 \log 2} }
    &\leq 2^n \sup_{\abs{h} \leq h_0} \PP\bc{\mathcal{N}(0,1) \geq \frac{(1+\vep)\sqrt{2n \log 2}}{\sqrt{1 + f(n,h)}}} \\
    &\leq \sqrt{\frac{1 + f(n,h)}{4\pi (1+\vep)^2 n \log 2}} \cdot 2^{n\left(1 - \frac{(1+\vep)^2}{1 + f(n,h_0)}\right)}.
\end{align*}

Now choose \( h(\vep) > 0 \) such that
\[
\sup_{\abs{h} \leq h(\vep)} \limsup_{n \to \infty} \abs{f(n,h)} < \vep.
\]
Then it follows that
\begin{align}\label{eq-up-w}
  \PP\bc{\max_{\sigma \in \bigcup_{\abs{h} \leq h_0} S_{nh}} W_n(\sigma) \geq (1+\vep)\sqrt{2 \log 2} } = o_n(1).
\end{align}

\textbf{Lower bound.} To establish the lower bound, we apply the second moment method.  
Consider the set
\begin{align*}
  X = \cbc{ \sigma \in S_0 :~ W_n(\sigma) \geq (1 - \vep) \sqrt{2 \log 2} }.
\end{align*}
We  use the  Stirling’s approximation
\begin{align}\label{eq-stirling}
    m! = (1 + o_m(1)) \sqrt{2\pi m} \left( \frac{m}{e} \right)^m.
\end{align}
In particular, for any \( \beta \in (0, 1) \),
\begin{align}\label{eq-stirling-bin}
    \binom{m}{\beta m} 
    = (1 + o_m(1)) (2\pi m \beta(1 - \beta))^{-1/2} 
    \exp\left( -m \left( \beta \log \beta + (1 - \beta) \log (1 - \beta) \right) \right).
\end{align}

For any \( \sigma \in S_0 \), the variable \( W_n(\sigma) \) follows \( \mathcal{N}(0, 1/n) \).  
By (\ref{eq:Gauss-tail}) and \eqref{eq-stirling-bin}, we compute the first moment of \( |X| \) as
\begin{align}\label{eq-fir-moment-x}
    \Erw\brk{\abs{X}} 
    &= \binom{n}{n/2} \cdot \PP\left( W_n(\sigma) \geq (1 - \vep) \sqrt{2 \log 2} \,\middle|\, \sum_{i \in [n]} \sigma_i = 0 \right) \\
    &= (1 + o_n(1)) n^{-1} \sqrt{ \frac{1}{2 \pi^2 (1 - \vep)^2 \log 2} } \cdot 2^{n(1 - (1 - \vep)^2)}. \nonumber
\end{align}

Fix \( \eta \in (0, 1) \). We next decompose the second moment of \( |X| \) as 
\begin{align}\label{eq-sec-moment-x}
    \Erw\brk{ |X|^2 }
    &= \sum_{\sigma \in S_0} \sum_{\sigma' \in S_0} 
        \PP\left( W_n(\sigma), W_n(\sigma') \geq (1 - \vep) \sqrt{2 \log 2} \right) \nonumber \\
    &\leq \sum_{\sigma \in S_0} \sum_{\substack{ \sigma' \in S_0 \\ |\sigma \cdot \sigma'| \geq \eta n }} 
        \PP\left[ W_n(\sigma) \geq (1 - \vep) \sqrt{2 \log 2} \right] \\
    &\quad + \sum_{\sigma \in S_0} \sum_{\substack{ \sigma' \in S_0 \\ |\sigma \cdot \sigma'| \leq \eta n }} 
        \PP\left[ W_n(\sigma), W_n(\sigma') \geq (1 - \vep) \sqrt{2 \log 2} \right]. \nonumber
\end{align}
For \( \sigma, \sigma' \in S_0 \), note from $4\mid n$ 
that \( 4 \mid \sum_{i \in [n]} \sigma_i \sigma_i' \). To bound the first term on the right-hand side of \eqref{eq-sec-moment-x} from above, for a given $\sum_{i \in [n]} \sigma_i = 0$, we need to count the number of  \(  \sigma' \) satisfying
\begin{align}\label{eq-inner-gamma}
    \sum_{i \in [n]} \sigma_i' = 0 
    \quad \text{and} \quad 
    \sum_{i \in [n]} \sigma_i \sigma_i' = 4y.
\end{align}
Observe that
\[
\sum_{i \in [n]} \sigma_i \sigma_i' 
= 2 \cdot \sum_{i \in [n]} \indic{ \sigma_i = \sigma_i' } - n.
\]
Thus, we deduce:
\begin{enumerate}
    \item \( \abs{ \cbc{ i \in [n] : (\sigma_i, \sigma_i') = (1, 1) } } 
          = \abs{ \cbc{ i \in [n] : (\sigma_i, \sigma_i') = (-1, -1) } } 
          = \frac{n}{4} + y \).
    \item \( \abs{ \cbc{ i \in [n] : (\sigma_i, \sigma_i') = (1, -1) } } 
          = \abs{ \cbc{ i \in [n] : (\sigma_i, \sigma_i') = (-1, 1) } } 
          = \frac{n}{4} - y \).
\end{enumerate}
consequently, the number of such $\sigma'$ is
\begin{align}\label{eq_number_sigma'}
    \binom{n/2}{n/4 - y}^2.
\end{align}
Hence, we can estimate the first term on the right-hand side of \eqref{eq-sec-moment-x} as
\begin{align}\label{eq-sec-moment-x-t1-1}
    &\sum_{\sigma \in S_0} \sum_{\substack{ \sigma' \in S_0 \\ |\sigma \cdot \sigma'| \geq \eta n }} 
        \PP\left[ W_n(\sigma) \geq (1 - \vep) \sqrt{2 \log 2} \right] \\
    &= \Erw\brk{ |X| } \cdot \sum_{|y| \geq \eta n} \binom{n/2}{n/4 - y}^2 
    \leq \Erw\brk{ |X| } \cdot n \cdot \binom{n/2}{(1/4 - \eta) n}^2 \nonumber \\
    &= (1 + o_n(1)) \Erw\brk{ |X| } 
        \cdot \frac{4}{\pi (1 - 16\eta^2)} 
        \exp\left( 
            -n \left[ \left( \tfrac{1}{2} - 2\eta \right) \log \left( \tfrac{1}{2} - 2\eta \right) 
            + \left( \tfrac{1}{2} + 2\eta \right) \log \left( \tfrac{1}{2} + 2\eta \right) 
            \right] 
        \right). \nonumber
\end{align}

Note that
\[
\lim_{x \downarrow 0} \left( x \log x + (1 - x) \log (1 - x) \right) = 0.
\]
Hence, there exists \( \eta = \eta(\vep) \in (0, 1/4) \) such that
\begin{align*}
    \left( \frac{1}{2} - 2\eta \right) \log \left( \frac{1}{2} - 2\eta \right) 
    + \left( \frac{1}{2} + 2\eta \right) \log \left( \frac{1}{2} + 2\eta \right)
    > -\left(1 - (1 - \vep)^2\right) \log 2.
\end{align*}
For such $\eta$, combining this inequality with \eqref{eq-fir-moment-x} and \eqref{eq-sec-moment-x-t1-1}, we obtain
\begin{align}\label{eq-sec-moment-x-t1-2}
    \sum_{\sigma \in S_0} \sum_{\substack{\sigma' \in S_0 \\ |\sigma \cdot \sigma'| \geq \eta n}} 
    \PP\left[ W(\sigma) \geq (1 - \vep) \sqrt{2 \log 2} \right] 
    = o_n(1) \cdot \Erw\brk{ |X| }^2.
\end{align}

We now turn to the second term on the right-hand side of \eqref{eq-sec-moment-x}.  
For \( \sigma, \sigma' \in S_0 \), let \( \sum_{i \in [n]} \sigma_i \sigma_i' = 4y \), where $y$ is an integer. Then \eqref{eq-mul-u-l} (with $t=0$) implies
\[
\Erw\brk{ W_n(\sigma) W_n(\sigma') } 
= \frac{ (1 + 4y/n)^r + (1 - 4y/n)^r - 2 }{ (2^r - 2) n }.
\]
Define
\[
\rho = \frac{ (1 + 4y/n)^r + (1 - 4y/n)^r - 2 }{ 2^r - 2 }.
\]
Since both \( W(\sigma) \) and \( W(\sigma') \) follow \( \mathcal{N}(0, 1/n) \), \Cref{lem-cor-normal-tail} gives
\begin{align}\label{eq-ub-two-normal}
    \PP\left[ W(\sigma), W(\sigma') \geq (1 - \vep) \sqrt{2 \log 2} \right] 
    \leq \frac{(1 + \rho)^2}{4 \pi (1 - \vep)^2 n \sqrt{1 - \rho^2} \log 2} \cdot 2^{- \frac{2(1 - \vep)^2}{1 + \rho} n }.
\end{align}

Consequently, by \eqref{eq-stirling} and \eqref{eq_number_sigma'}, the number of pairs \( (\sigma, \sigma') \) such that  \( \sigma, \sigma' \in S_0 \)  and \( \sum_{i \in [n]} \sigma_i \sigma_i' = 4y \) is
\begin{align}\label{eq_number-pair}&\quad\binom{n}{n/2}\binom{n/2}{n/4 - y}^2=\frac{n!}{\left( (n/4 + y)! \right)^2 \left( (n/4 - y)! \right)^2 }\\
    &= (1 + o_n(1)) (2\pi n)^{-3/2} 
        \cdot \frac{16}{1 - 16y^2/n^2} \cdot 
        \left( \frac{4}{1 + 4y/n} \right)^{n/2 + 2y} 
        \left( \frac{4}{1 - 4y/n} \right)^{n/2 - 2y}\nn \\
    &= (1 + o_n(1)) (2\pi n)^{-3/2} 
        \cdot \frac{16}{1 - 16y^2/n^2} \cdot 2^{2n}\nn\\
        &\quad\times
         \exp\left( 
            -\frac{n}{2} \left[ 
                \left( 1 + \frac{4y}{n} \right) \log \left( 1 + \frac{4y}{n} \right) 
                + \left( 1 - \frac{4y}{n} \right) \log \left( 1 - \frac{4y}{n} \right) 
            \right]
        \right). \nonumber
\end{align}


On the other hand, for \( K > 0 \), combining \eqref{eq-ub-two-normal} and \eqref{eq_number-pair} yields
\begin{align}\label{eq-sec-mid}
    &\sum_{\sigma \in S_0} \sum_{\substack{\sigma' \in S_0 \\ |\sigma \cdot \sigma'| \in [K \sqrt{n}, \eta n]}} 
    \PP\left[ W(\sigma), W(\sigma') \geq (1 - \vep) \sqrt{2 \log 2} \right] \\
    \leq\;& (1 + o_n(1)) (2\pi n)^{-5/2} \cdot \frac{1}{2 (1 - \vep)^2 \log 2} \cdot 2^{2n (1 - (1 - \vep)^2)}\cdot  \sum_{|y| \in [K\sqrt{n}/4, \eta n/4]} 
    \frac{(1 + \rho)^2}{\sqrt{1 - \rho^2}}\frac{16}{1 - \frac{16 y^2}{n^2}} \nonumber \\
    &\times 
       \exp\left( 
        \frac{n}{2} \left[ 
            \frac{4\rho \log 2}{1 + \rho} 
            - \left(1 + \frac{4y}{n}\right) \log\left(1 + \frac{4y}{n}\right) 
            - \left(1 - \frac{4y}{n}\right) \log\left(1 - \frac{4y}{n}\right) 
        \right]
    \right). \nonumber
\end{align}

Let \( t = \frac{4y}{n} \in [-\eta, \eta] \setminus (-K/\sqrt{n}, K/\sqrt{n}) \). Then,
\begin{align*}
    &\frac{4 \rho \log 2}{1 + \rho} 
    - (1 + t) \log(1 + t) 
    - (1 - t) \log(1 - t) \\
    \leq\;& 4 \log 2 \cdot \frac{(1 + t)^r + (1 - t)^r - 2}{2^r - 2} 
    - (1 + t) \log(1 + t) 
    - (1 - t) \log(1 - t).
\end{align*}

On the other hand,
\begin{align}\label{eq-bound-power-asy}
    &4 \log 2 \cdot \frac{(1 + t)^r + (1 - t)^r - 2}{2^r - 2} 
    - (1 + t) \log(1 + t) 
    - (1 - t) \log(1 - t) \\
    =\;& - \bc{1+o_r(1) }t^2 + o_{1/\abs{t}}(t^2). \nonumber
\end{align}
For sufficiently small $\eta$,  there exists \( D > 0 \) which depends on $r$ only, such that for all \( t \in [-\eta, \eta] \),
\begin{align}\label{eq-bound-power}
    &4 \log 2 \cdot \frac{(1 + t)^r + (1 - t)^r - 2}{2^r - 2} 
    - (1 + t) \log(1 + t) 
    - (1 - t) \log(1 - t)\\ \nn
    \leq& - (D + o_r(1)) t^2.
\end{align}

Let 
\[
\rho_{\max} := \sup_{t \in [-\eta, \eta]} \frac{(1 + t)^r + (1 - t)^r - 2}{2^r - 2}.
\]
Then combining \eqref{eq-sec-mid} and \eqref{eq-bound-power}, we obtain
\begin{align*}
    &\sum_{\sigma \in S_0} \sum_{\substack{ \sigma' \in S_0 \\ |\sigma \cdot \sigma'| \in [K \sqrt{n}, \eta n]}} 
    \PP\left[ W(\sigma), W(\sigma') \geq (1 - \vep) \sqrt{2 \log 2} \right] \\
    \leq\;& (1 + o_n(1)) (2\pi n)^{-5/2} \cdot \frac{1}{2 (1 - \vep)^2 \log 2} 
    \cdot 2^{2n (1 - (1 - \vep)^2)} 
    \cdot \frac{(1 + \rho_{\max})^2}{\sqrt{1 - \rho_{\max}^2}} 
    \cdot \frac{16}{1 - \eta^2} \\
    &\times 2 \sum_{y = K\sqrt{n}/4}^{\infty} 
    \exp\left( -8(D + o_r(1)) \cdot \frac{y^2}{n} \right).
\end{align*}

Meanwhile, the tail sum can be bounded as
\begin{align*}
   \sum_{y = K\sqrt{n}/4}^{\infty} 
   \exp\left( -8(D + o_r(1)) \cdot \frac{y^2}{n} \right)
   &= (1 + o_n(1)) \sqrt{n} 
   \int_{K/4}^\infty \exp\left( -8(D + o_r(1)) x^2 \right) \, \mathrm{d}x \\
   &= o_{K,r}(1) \cdot \sqrt{n}.
\end{align*}

Hence,
\begin{align}\label{eq-sec-2}
    &\sum_{\sigma \in S_0} \sum_{\substack{ \sigma' \in S_0 \\ |\sigma \cdot \sigma'| \in [K \sqrt{n}, \eta n]}} 
    \PP\left[ W(\sigma), W(\sigma') \geq (1 - \vep) \sqrt{2 \log 2} \right] 
    = o_{K,r}(1) \cdot \Erw\brk{ |X| }^2.
\end{align}

Finally, we handle the case where \( |\sigma \cdot \sigma'| \in [-K\sqrt{n}, K \sqrt{n}] \).  
In this regime,  \( t=\frac{4y}{n} \in [-K/\sqrt{n}, K/\sqrt{n}] \), and we have
\begin{align}\label{eq-rho-ksqrtn}
    \rho = \frac{(1 + t)^r + (1 - t)^r - 2}{2^r - 2} = o_n(1).
\end{align}
Then, by \eqref{eq-bound-power-asy} and $t \in [-K/\sqrt{n}, K/\sqrt{n}]$,
\begin{align}\label{eq-bound-power-2}
    &\quad 4 \log 2 \cdot \frac{(1 + t)^r + (1 - t)^r - 2}{2^r - 2}
    - (1 + t) \log(1 + t) - (1 - t) \log(1 - t) \\
    &= - (1 + o_r(1) + o_n(1)) t^2.\nn
\end{align}

Combining \eqref{eq-rho-ksqrtn} and \eqref{eq-bound-power-2} with \eqref{eq-sec-mid}, we obtain 
\begin{align*}
    &\sum_{\sigma \in S_0} \sum_{\sigma' \in S_0,\, |\sigma \cdot \sigma'| \in [K \sqrt{n}]} 
    \PP\left[ W(\sigma), W(\sigma') \geq (1 - \vep) \sqrt{2 \log 2} \right] \\
    \leq\; & (1 + o_n(1)) (2\pi n)^{-5/2} \cdot \frac{1}{2 (1 - \vep)^2 \log 2} \cdot 2^{2n (1 - (1 - \vep)^2)} \\
    &\quad \times \sum_{y = -K \sqrt{n}/4}^{K \sqrt{n}/4} 16 \exp\left( - (1 + o_r(1) + o_n(1)) \cdot \frac{8 y^2}{n} \right).
\end{align*}

We estimate the sum by an integral as
\begin{align*}
    &\quad\sum_{y = -K \sqrt{n}/4}^{K \sqrt{n}/4} \exp\left( - (1 + o_r(1) + o_n(1)) \cdot \frac{8 y^2}{n} \right) \\
    &= (1 + o_n(1)) \sqrt{n} \int_{-K/4}^{K/4} \exp\left( - (1 + o_r(1) + o_n(1)) \cdot 8 z^2 \right) \, \mathrm{d}z \\
    &\leq (1 + o_r(1) + o_n(1)) \cdot \frac{\sqrt{2\pi n}}{4}.
\end{align*}

Hence,
\begin{align}\label{eq-sec-3}
    &\sum_{\sigma \in S_0} \sum_{\sigma' \in S_0,\, |\sigma \cdot \sigma'| \in [K \sqrt{n}]} 
    \PP\left[ W(\sigma), W(\sigma') \geq (1 - \vep) \sqrt{2 \log 2} \right] \\
    \leq\; & (1 + o_n(1) + o_r(1)) \cdot \frac{1}{2 \pi^2 (1 - \vep)^2 n^2 \log 2} \cdot 2^{2n (1 - (1 - \vep)^2)} \nonumber \\
    =\; & (1 + o_n(1) + o_r(1)) \cdot \Erw\brk{ |X| }^2. \nonumber
\end{align}

Combining the estimates from \eqref{eq-sec-moment-x}, \eqref{eq-sec-moment-x-t1-2}, \eqref{eq-sec-2}, and \eqref{eq-sec-3}, we conclude:
\begin{align*}
    \Erw\brk{ |X|^2 } 
    = (1 + o_n(1) + o_r(1) + o_{K, r}(1)) \cdot \Erw\brk{ |X| }^2.
\end{align*}

Since both \( \Erw\brk{ |X|^2 } \) and \( \Erw\brk{ |X| }^2 \) are independent of \( K \), we may let \( K \to \infty \) to conclude:
\begin{align*}
    \Erw\brk{ |X|^2 } = (1 + o_n(1) + o_r(1)) \cdot \Erw\brk{ |X| }^2.
\end{align*}

Then, by the Cauchy–Schwarz inequality,
\begin{align}\label{eq-low-w}
    \PP\left[ \max_{\sigma \in S_0} W_n(\sigma) \geq (1 - \vep) \sqrt{2 \log 2} \right] 
    = \PP\left[ |X| \geq 1 \right] 
    \geq \frac{ \Erw\brk{ |X| }^2 }{ \Erw\brk{ |X|^2 } } 
    = 1 + o_n(1) + o_r(1).
\end{align}

Combining the upper bound in \eqref{eq-up-w} with the lower bound in \eqref{eq-low-w} completes the proof of Lemma~\ref{lem-bound-max-normal}.
\end{proof}

\section{Proof of \Cref{thm-limit}}\label{sec-proof}
Since \Cref{thm-convergence} yields that $        \frac{{\rm MinM}(\G(n,p),F)}{n}
        \to m(F,c)$, to compute $m(F,c)$, we only need to consider the $n$ such that $4\mid n$.
For \( \sigma \in S_{nh} \), by \eqref{eq-ub-alp}, we have
\[
\alpha = \kappa_1 \cdot \frac{(1 + h)^r + (1 - h)^r - 2}{2}.
\]
For $h_0$ as defined in \Cref{lem-bound-max-normal},
\[
\alpha_0 = \kappa_1 \cdot \frac{(1 + h_0)^r + (1 - h_0)^r - 2}{2} \leq 0.
\]

Recall from \eqref{def-T-alp} that
\begin{align*}
    T_n^\alpha 
    &= n^{-(r+1)/2} \cdot \sqrt{\kappa_2} \cdot \max_{\sigma \in \cbc{-1,1}^n} U_n(\sigma), \\
    &\quad \text{subject to } 
    n^{-r} \sum_{v_1, \ldots, v_r \in [n]} \kappa_1 f(\sigma_{v_1}, \ldots, \sigma_{v_r}) = \alpha.
\end{align*}

By \Cref{lem-bound-max-normal}, with probability \( 1 + o_n(1) + o_r(1) \),
\[
(1 - \vep) \cdot \sqrt{2\kappa_2 r!(2^{r-1} - 1) \log 2} 
\leq T_n^0 
\leq \max_{\alpha \in [\alpha_0, 0]} T_n^\alpha 
\leq (1 + \vep) \cdot \sqrt{2\kappa_2 r!(2^{r-1} - 1) \log 2}.
\]

It follows that, with probability \( 1 + o_n(1) + o_r(1) \),
\[
V_n = \Erw\left[ \max_{\alpha} \left( \alpha d + T_n^\alpha \sqrt{d} \right) \right] 
\geq \Erw\left[ T_n^0 \cdot \sqrt{d} \right] 
\geq (1 - \vep) \cdot \sqrt{2\kappa_2 r!(2^{r-1} - 1) d \log 2}.
\]

Moreover, by \Cref{cla-alp}, with high probability,
\[
 \max_{\alpha \in [\alpha_0, 0]} \left( \alpha d + T_n^\alpha \sqrt{d} \right)  
= \max_{\alpha} \left( \alpha d + T_n^\alpha \sqrt{d} \right).
\]

On the other hand, as $\alpha\leq 0$,
\begin{align}\label{ineq-T-U}
    \max_{\alpha} \left( \alpha \sqrt{d} + T_n^\alpha  \right)
    \leq n^{-(r+1)/2} \sqrt{\kappa_2} \cdot \max_{\sigma \in \{-1,1\}^n} U_n(\sigma).
\end{align}

We next investigate the right-hand side of \eqref{ineq-T-U}. By \eqref{eq-mul-u-l}, we have
\begin{align*}
    \Erw\brk{U_n(\sigma)^2} 
    &= r! \cdot n^r \sum_{m=1}^{\rou{r/2}} \binom{r}{2m} 1 + O_n(1)  n^{r-1} \sum_{i \in [n]} \sigma_i   \\
    &\leq f(r)^2 \cdot n^r,
\end{align*}
for some function \( f(r) > 0 \).
Consequently, by~\eqref{eq:Gauss-tail},
\begin{align*}
    \mathbb{P} \left( n^{-r/2} \max_{\sigma \in \{-1,1\}^n} U_n(\sigma) \geq x \right)
    &\leq \sum_{\sigma \in \{-1,1\}^n} \mathbb{P} \left( n^{-r/2} U_n(\sigma) \geq x \right) \\
    &\leq 2^n \cdot \frac{f(r)}{x \sqrt{2\pi}} \cdot \exp\left( -\frac{x^2}{2f(r)^2} \right).
\end{align*}

Therefore,
\begin{align*}
    \sum_{\substack{x \geq 2f(r)n^{1/2} \\ \sqrt{\kappa_2} n^{-1/2} x \in \mathbb{Z}}} 
    \mathbb{P} \left( n^{-r/2} \max_{\sigma \in \{-1,1\}^n} U_n(\sigma) \geq x \right) = o_n(1),
\end{align*}
which, combined with\eqref{ineq-T-U}, implies that
\begin{align*}
    \sum_{\substack{x \geq 2f(r)\sqrt{\kappa_2} \\ x \in \mathbb{Z}}} 
    \mathbb{P} \left( \max_{\alpha} \left( \alpha d + T_n^\alpha \sqrt{d} \right) \geq x \right) = o_n(1).
\end{align*}

Hence, the sequence \( \left\{ \max_{\alpha} \left( \alpha \sqrt{d} + T_n^\alpha  \right) \right\}_{n \geq 1} \) is uniformly integrable.  
As a consequence, with probability \( 1 + o_n(1) + o_r(1) \),
\begin{align*}
    V_n &= \Erw\left[ \max_{\alpha} \left( \alpha d + T_n^\alpha \sqrt{d} \right) \right]= \Erw\left[ \max_{\alpha \in [\alpha_0, 0]} \left( \alpha d + T_n^\alpha \sqrt{d} \right) \right] + o_n(\sqrt{d}) \\
   & 
\leq (1 + \vep) \cdot \sqrt{2\kappa_2 r! (2^{r-1} - 1) d \log 2} + o_n(\sqrt{d}).
\end{align*}

\Cref{pro:H-V} then yields that, with probability \( 1 + o_n(1) + o_r(1) \),
\begin{align*}
    \frac{1}{n} \min_{\sigma \in \cbc{-1,1}^n} H(\sigma) 
    = (1 + o_r(1)) \cdot \sqrt{2\kappa_2 r! (2^{r-1} - 1) d \log 2} 
    + \frac{c^s}{2^{r-1} \aut(F)} 
    + o_{n,d}(\sqrt{d}).
\end{align*}
The  result then follows.
\qed


\begin{acks}[Acknowledgments]
The authors would like to thank Annika Heckel, Noela Müller, Connor Riddlesden and 
Lluís Vena for helpful discussions, especially regarding the proof of  \Cref{thm-convergence}.
\end{acks}
\vspace{1em}


\begin{funding}
The authors are supported  by the National Science Foundation under Grant No. DMS-1928930 while they were in residence at the Simons Laufer Mathematical Sciences Institute in Berkeley, California, during the spring semester. 
The work of the second author is also
supported by National Science 
Foundation grant CISE  2233897.
The work of the third author is also supported by the Netherlands Organisation for Scientific Research (NWO) through the Gravitation NETWORKS grant 024.002.003 and the European Union’s Horizon 2020 research and innovation programme under the Marie Skłodowska-Curie grant agreement No. 945045.
\end{funding}
\DeclareRobustCommand{\Hofstad}[3]{#3}
\DeclareRobustCommand{\Esker}[3]{#3}
\bibliographystyle{imsart-number} 
\bibliography{bibmo}       

\end{document}

%% file: preamble.tex
\newcommand{\invisible}[1]{}

\DeclareSymbolFont{extraup}{U}{zavm}{m}{n}
\DeclareMathSymbol{\varheart}{\mathalpha}{extraup}{86}
\DeclareMathSymbol{\vardiamond}{\mathalpha}{extraup}{87}

\usepackage{amssymb}
\usepackage{dsfont}
\usepackage{graphicx}
\usepackage{bbm}
\usepackage{mathrsfs}

\usepackage{xcolor}
\definecolor{aquamarine}{rgb}{0.5, 1.0, 0.83}
\definecolor{aqua}{rgb}{0.0, 1.0, 1.0}
\usepackage{subcaption}
\usepackage{amsthm}

\graphicspath{{pictures/}}

\usepackage{multirow}

\ifpdf
\else

  \usepackage{nohyperref}
  \usepackage[pageref]{backref}
\fi

\usepackage{enumitem}

\usepackage{cleveref}
\theoremstyle{plain}
\newtheorem{theorem}{Theorem}[section]
\newtheorem{corollary}[theorem]{Corollary}

\newtheorem{lemma}[theorem]{Lemma}

\newtheorem{proposition}[theorem]{Proposition}

\newtheorem{example}[theorem]{Example}
\newtheorem{op}[theorem]{Open problem}
\newtheorem{definition}[theorem]{Definition}

\newcommand{\todotemp}[1]{}

\numberwithin{equation}{section}
\numberwithin{theorem}{section}

\makeatletter

\newcommand{\Rbold} {{\mathbb R}}

\newcommand{\e}{{\mathrm e}}

\newcommand\1{\mathbbm{1}}
\newcommand{\indic}[1]{\1_{\{#1\}}}

\newcommand{\Var}{\mathrm{Var}}

\newcommand {\whp} {{whp}}

\newcommand{\R}{\Rbold}

\newcommand{\sss}   { \scriptscriptstyle }

\newcommand{\rO}{{\mathrm{\scriptstyle{O}}}}
\newcommand{\rY}{{\mathrm{\scriptstyle{Y}}}}

\newcommand {\vep}{\varepsilon}

\def\eqalign#1\enalign{
    \begin{align}#1\end{align}
    }
\newcommand{\eq}{\begin{equation}}
\newcommand{\en}{\end{equation}}
\newcommand{\ben}{\begin{enumerate}}
\newcommand{\een}{\end{enumerate}}

\newcommand{\nn}{\nonumber}

\renewcommand{\to}{\rightarrow}

\newcommand{\beginsol}[1]{}